\documentclass[11pt]{amsart}
\usepackage[T1]{fontenc}
\usepackage[utf8]{inputenc}
\usepackage{lmodern}
\usepackage{amsmath,amssymb,amsthm,mathtools}
\usepackage{tikz}
\usepackage[a4paper,margin=30mm]{geometry}
\usepackage[hidelinks]{hyperref}
\hypersetup{pdftitle={On multilinear forms for mod p representations of GL2 over a p-adic field},pdfauthor={Hao Liu, Haoran Wang, and Yuting Yang}}
\newtheorem{theorem}{Theorem}[section]
\newtheorem{corollary}[theorem]{Corollary}
\newtheorem{proposition}[theorem]{Proposition}
\newtheorem{lemma}[theorem]{Lemma}

\theoremstyle{definition}

\theoremstyle{remark}
\newtheorem{remark}[theorem]{Remark}
\DeclareMathOperator{\GL}{GL}
\DeclareMathOperator{\Hom}{Hom}
\DeclareMathOperator{\End}{End}
\DeclareMathOperator{\ind}{ind}
\DeclareMathOperator{\Ind}{Ind}
\DeclareMathOperator{\Sym}{Sym}

\newcommand{\FF}{\mathbb F}
\newcommand{\cO}{\mathcal O}
\numberwithin{equation}{section}
\allowdisplaybreaks[1]
\title[Multilinear forms for mod \(p\) representations]{On multilinear forms for mod \(p\) representations of \(\GL_2\) over a \(p\)-adic field}
\author{Hao Liu}
\address{Department of Mathematical Sciences, Tsinghua University, Beijing, 100084}
\email{liuhao21@mails.tsinghua.edu.cn}
\author{Haoran Wang}
\address{Academy for Multidisciplinary Studies, Capital Normal University, Beijing 100048}
\email{haoran@cnu.edu.cn}
\author{Yuting Yang}
\address{School of Mathematical Sciences, Capital Normal University, Beijing 100048}
\email{yangyuting0223@163.com}
\date{}
\begin{document}
\begin{abstract}
Let \(F\) be a finite extension of \(\mathbb Q_p\), for any prime \(p\), and let
\(L/F\) be a ramified quadratic extension. We compute the spaces of
\(\cO_L^\times\)-equivariant linear forms on the representation $\ind^G_{KZ}\sigma/(\Phi-\lambda)$. The only nonzero cases arise
from one-dimensional quotients. Consequently, the restriction of an
infinite-dimensional irreducible smooth mod \(p\) representation
of \(\GL_2(F)\) with a central character to \(\cO_L^\times\) has no character quotient. We apply this vanishing result to show that a finite tensor product of smooth mod \(p\) representations of \(\GL_2(F)\) admits no nonzero invariant linear form whenever at least one factor is infinite-dimensional and irreducible with a central
character.
\end{abstract}
\maketitle

\section{Introduction}

Let $F$ be a finite extension of $\mathbb{Q}_p$, and let $G$ be a  $p$-adic Lie group over $F$. The study of $G$-invariant multilinear forms on smooth complex representations of $G$ is a classical problem in representation theory and has profound applications in number theory.
For three irreducible admissible smooth complex representations of $G$, where $G$ is either $\mathrm{GL}_2(F)$ or $D^{\times}$ (here $D$ is the nonsplit quaternion algebra over $F$), Prasad proves the multiplicity-one theorem \cite[Theorem~1.1]{Prasad}
\[
\dim_{\mathbb{C}}\operatorname{Hom}_G(\pi_1\otimes_{\mathbb{C}}\pi_2\otimes_{\mathbb{C}}\pi_3,\mathbb{C})\leq 1,
\]
together with a dichotomy between the $\mathrm{GL}_2(F)$ and $D^{\times}$ cases (\cite[Theorem~1.2]{Prasad}).

In characteristic $p$, the analogue of Prasad's multiplicity-one result is proved by Allen--Wake--Zhang \cite[Corollary~4.1]{AllenWakeZhang2026} in the case $G = D^{\times}$ and $p>2$. In this paper, we consider the case $G = \mathrm{GL}_2(F)$. For $G = \mathrm{GL}_2(\mathbb{Q}_p)$, it follows from \cite[Corollary~1.4]{Fan} that, for any $n\geq 1$ and smooth
\(\overline{\mathbb{F}}_p\)-representations \(\pi_1,\ldots,\pi_n\) of $G$, 
\[
 \Hom_{\GL_2(\mathbb Q_p)}
 (\pi_1\otimes\cdots\otimes\pi_n,\overline{\mathbb{F}}_p)=0,
\]
whenever at least one $\pi_i$ is an infinite-dimensional irreducible admissible representation. Over a general finite extension \(F/\mathbb Q_p\), the argument of \cite[Lemma~3.5 and Proposition~4.1]{Fan} also gives this vanishing whenever at least one factor is an irreducible admissible principal series or special series representation. These results leave open the case in which all infinite-dimensional factors are supersingular over general $F$. Fan's treatment of supersingular factors over $ \mathbb{Q}_p$ relies on the structural results of Pa\v{s}k\=unas \cite{Paskunas}, and this approach does not directly extend to a general finite extension $F$.

Our first main result is the following theorem. Throughout this paper, we write $\mathbb{F} = \overline{\mathbb{F}}_p$.
\begin{theorem} \label{thm:main}
Let $n\geq 1$, and let $\pi_1,\ldots,\pi_n$ be smooth $\mathbb{F}$-representations of $G=\mathrm{GL}_2(F)$. Suppose that at least one $\pi_i$ is infinite-dimensional and irreducible, with a central character. Then
\[
\operatorname{Hom}_G(\pi_1\otimes_{\mathbb{F}}\cdots\otimes_{\mathbb{F}}\pi_n,\mathbb{F})=0.
\]
\end{theorem}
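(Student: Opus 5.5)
We reduce Theorem~\ref{thm:main} to the vanishing result for $\mathcal O_L^\times$ announced in the abstract. Fix a ramified quadratic extension $L/F$ and embed $L^\times\hookrightarrow G$ by choosing an $F$-basis of $L$. Suppose $\pi_1$ is the infinite-dimensional irreducible factor with central character. Put $\tau=\pi_2\otimes\cdots\otimes\pi_n$ (or $\tau=\mathbb F$ if $n=1$). Then a nonzero $G$-invariant form on $\pi_1\otimes\tau$ is the same as a nonzero $G$-equivariant map $\pi_1\to\tau^\vee$, where $\tau^\vee=\Hom_{\mathbb F}(\tau,\mathbb F)$ is the full algebraic dual. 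This map is injective because $\pi_1$ is irreducible, and it lands in the smooth vectors $(\tau^\vee)^{\infty}$.

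The plan is to restrict to the compact abelian group $\mathcal O_L^\times$ and find a character quotient of $\pi_1$. Since the form $\ell$ is nonzero, choose a vector $w\in\tau$ with $\ell(\cdot\otimes w)\neq 0$ on $\pi_1$. The vector $w$ is fixed by some open subgroup $U\subset\mathcal O_L^\times$. Then $v\mapsto\ell(v\otimes w)$ is $U$-invariant, giving a nonzero $U$-invariant linear form on $\pi_1$.

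To upgrade this to $\mathcal O_L^\times$-equivariance, look at the finite-dimensional space $W=\mathbb F[\mathcal O_L^\times]w\subset\tau$, on which the finite abelian group $\mathcal O_L^\times/U$ acts. Its order may be divisible by $p$, so $W$ need not be semisimple. However, $W$ has a filtration by $\mathcal O_L^\times$-stable subspaces with one-dimensional graded pieces, since $\mathcal O_L^\times$ is abelian and $\mathbb F$ is algebraically closed. Now consider the nonzero $\mathcal O_L^\times$-equivariant map $\pi_1\to W^\vee$, $v\mapsto(w'\mapsto \ell(v\otimes w'))$. Its image is a nonzero finite-dimensional $\mathcal O_L^\times$-representation, and any nonzero finite-dimensional representation of an abelian group over $\mathbb F$ has a character quotient. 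Composing gives a nonzero $\mathcal O_L^\times$-equivariant surjection $\pi_1|_{\mathcal O_L^\times}\twoheadrightarrow\chi$ for some smooth character $\chi$.

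This contradicts the corollary stated in the abstract: the restriction of an infinite-dimensional irreducible smooth $\pi_1$ with central character to $\mathcal O_L^\times$ has no character quotient.

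The main obstacle is that vanishing result itself, which the paper derives by computing $\Hom_{\mathcal O_L^\times}(\ind_{KZ}^G\sigma/(\Phi-\lambda),\chi)$. The reduction from the general irreducible $\pi_1$ to these quotients goes as follows. By Barthel--Livné and Breuil, every irreducible $\pi_1$ with central character is a quotient of some $\ind_{KZ}^G\sigma/(\Phi-\lambda)$. A character quotient of $\pi_1$ therefore pulls back to one of $\ind_{KZ}^G\sigma/(\Phi-\lambda)$. By the computation, such a quotient only exists when it factors through a one-dimensional quotient of $\ind_{KZ}^G\sigma/(\Phi-\lambda)$. Then the map $\pi_1\to\chi$ has a kernel containing the kernel of $\ind\to$ (a one-dimensional $G$-quotient). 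So $\pi_1$ would admit a one-dimensional $G$-quotient, which is impossible for an infinite-dimensional irreducible representation.

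Proving the computation itself would require an explicit analysis of $\ind_{KZ}^G\sigma$ restricted to $\mathcal O_L^\times$ via the tree. Because $L/F$ is ramified, the uniformizer $\varpi_L$ acts on the tree without fixed vertices, and an $\mathcal O_L^\times$-equivariant form is then determined by its values near a single edge, with $\Phi-\lambda$ forcing those values to vanish.
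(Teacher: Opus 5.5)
Your argument is correct, but it does not follow the paper's main route. The paper fixes the pro-$p$ group $P=1+\varpi_L\cO_L$ and deduces $\FF\otimes_{\FF[\![P]\!]}\pi_1=0$ from Corollary~\ref{cor:open-subgroup}. That corollary is itself obtained from Corollary~\ref{cor:unit-vanishing} via Frobenius reciprocity and a character filtration of $\Ind_P^{\cO_L^\times}\FF$. The paper then transfers this vanishing to the whole tensor product by the dévissage of Lemma~\ref{lem:tensor-vanishing}.

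You argue by smoothness, as in Remark~\ref{rem:dospinescu}. Instead of stopping at a $U$-invariant form on $\pi_1$ and invoking Corollary~\ref{cor:open-subgroup}, you map $\pi_1$ equivariantly into $W^\vee$ and take a character quotient of the finite-dimensional image. This contradicts Corollary~\ref{cor:unit-vanishing} directly. The character is smooth because $\cO_L^\times$ is abelian, so $U$ fixes $W$ pointwise. Your step is in effect the dual of the filtration step in Corollary~\ref{cor:open-subgroup}, and it needs neither that corollary nor the Iwasawa-algebra lemma. In exchange, the paper's route gives a stronger output: the $P$-coinvariants of $\pi_1\otimes\cdots\otimes\pi_n$ vanish for one fixed $P$, so even $P$-invariant forms vanish. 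Your argument uses the full $\cO_L^\times$-invariance of $\ell$.

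Your sketch of Corollary~\ref{cor:unit-vanishing} is also a valid variant. The paper excludes the case $\dim\sigma=1$, $\lambda^2=1$ using \cite[Theorem~33(3)]{BL94}. You instead observe that in this case the one-dimensional space of Theorem~\ref{thm:restriction} is spanned by the restriction of a one-dimensional $G$-quotient $\psi$ of $\ind^G_{KZ}\sigma/(\Phi-\lambda)$. So a nonzero form $f$ on $\pi_1$, pulled back along $\theta\colon\ind^G_{KZ}\sigma/(\Phi-\lambda)\twoheadrightarrow\pi_1$, satisfies $f\circ\theta=c\psi$ with $c\neq0$. Hence $\ker\theta\subseteq\ker\psi$, and $\pi_1$ has a one-dimensional $G$-quotient. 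This avoids knowing the structure of $\ind^G_{KZ}\mathbf 1/(\Phi\mp1)$, but two points need to be made explicit:
\begin{enumerate}
\item You should check that $\psi$ exists. For $\sigma$ trivial, $\psi$ is the map to $\mu\circ\det$ with $\mu$ unramified and $\mu(\varpi_F)=\lambda$, and $\psi\circ\Phi=\lambda(1+q)\psi=\lambda\psi$.
\item In the paper's normalization, $\pi_1$ is a quotient of $\ind^G_{KZ}\sigma/(\Phi-\lambda)$ only after twisting by some $\chi\circ\det$.
\end{enumerate}
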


Our proof avoids the obstacle by combining an explicit restriction calculation for the unit group $\mathcal{O}_L^\times$ of a ramified quadratic extension $L/F$ with a tensor-product property of pro-$p$ coinvariants. The restriction calculation supplies an open pro-$p$ subgroup $P$ on which an infinite-dimensional factor has zero coinvariants. The tensor-product property then transfers this vanishing to the full tensor product. 

\medskip
\noindent
\textbf{The restriction calculation.}
Let $L/F$ be a ramified quadratic extension, embedded in $M_2(F)$ as in  \S\ref{sec:restriction}, so that $\mathcal{O}_L^\times$ is a subgroup of $K=\mathrm{GL}_2(\mathcal{O}_F)$. Let $Z$ be the center of $G$, and let $k_F$ be the residue field of $F$. A Serre weight $\sigma$ is an irreducible $\mathbb{F}$-representation of $\mathrm{GL}_2(k_F)$, inflated to $K$. Fix a uniformizer $\varpi_F$ and extend $\sigma$ to $KZ$ by requiring $\varpi_F I_2$ to act trivially. By \cite[Proposition~8]{BL94},
\[
\operatorname{End}_G(\operatorname{ind}_{KZ}^G\sigma)\simeq \mathbb{F}[\Phi],
\]
where $\operatorname{ind}$ denotes compact induction and $\Phi$ is the Hecke operator normalized as in \S\ref{subsec:hecke}. The key calculation of this paper is the following theorem.

\begin{theorem}\label{thm:compute}
Let \(\sigma\) be a Serre weight, let \(\lambda\in\FF\), and let
\(\eta:\cO_L^\times\to\FF^\times\) be a smooth character. Then
\[
 \dim_{\FF}\Hom_{\cO_L^\times}
 \left(\left(\ind^G_{KZ}\sigma/(\Phi-\lambda)\right)|_{\cO_L^\times},\eta\right)
 =
 \begin{cases}
 1,& \text{ if } \dim\sigma=1,\ \lambda^2=1,\ \eta=\sigma|_{\cO_L^\times},\\
 0,&\text{otherwise}.
 \end{cases}
\]
\end{theorem}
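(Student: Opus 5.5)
We compute the left side through the dual picture. By Frobenius reciprocity and the exact sequence $\ind\sigma\xrightarrow{\Phi-\lambda}\ind\sigma\to\pi\to0$, the space $\Hom_{\cO_L^\times}(\pi,\eta)$ is the kernel of $\Phi^*-\lambda$ acting on $\Hom_{\cO_L^\times}(\ind^G_{KZ}\sigma,\eta)$. We describe the latter by a Mackey decomposition. The group $\cO_L^\times Z$ acts on the Bruhat--Tits tree, and since $L/F$ is ramified, $\cO_L^\times$ fixes the vertex $v_0$ (stabilizer $KZ$). It also fixes an edge $e_0=(v_0,v_1)$, since $\cO_L^\times\subset$ Iwahori $I$ for the embedding $\varpi_L\mapsto\begin{pmatrix}0&1\\ \varpi_F&0\end{pmatrix}$ (or similar). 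A uniformizer $\varpi_L$ normalizes $\cO_L^\times$ and swaps $v_0,v_1$, and $\cO_L^\times$ acts on the vertices at distance $n$ from $e_0$ transitively in each "branch". We therefore decompose $(\ind\sigma)|_{\cO_L^\times}=\bigoplus_{v}\ind_{\cO_L^\times\cap \mathrm{Stab}(v)}^{\cO_L^\times}\sigma^{g_v}$ over orbit representatives $g_v$. Each orbit contributes $\Hom_{H_v}(\sigma^{g_v},\eta)$ with $H_v=\cO_L^\times\cap g_vKZg_v^{-1}$, which is the group $\cO_F^\times(1+\varpi_L^{m}\cO_L)$ for $v$ at distance $m$ from $e_0$. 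Hence $\Hom_{\cO_L^\times}(\ind\sigma,\eta)\cong\prod_v \Hom_{H_v}(\sigma^{g_v},\eta)$, a product over orbits.

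Next we compute the local pieces. For $g_v$ corresponding to a vertex far from $v_0$, $H_v$ acts on $\sigma^{g_v}$ through a subgroup containing a unipotent radical $N(k_F)$ (when $m\ge 1$ on the relevant side) together with scalars. Then $\Hom_{H_v}(\sigma^{g_v},\eta)$ is at most one-dimensional, given by the $N$-coinvariants of $\sigma$, which are a line with torus character $\chi_\sigma^{s}$ (the opposite-end character). It is nonzero iff $\eta$ restricted to $\cO_F^\times(1+\varpi_L\cO_L)$ matches this character. At $v_0$ itself, $H_{v_0}=\cO_L^\times$ acts through $k_L^\times=k_F^\times$ (ramified!) times a pro-$p$ group mapping into the Iwahori's unipotent part. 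So again only the $N$-coinvariant line appears, with character $\chi_\sigma$ restricted to the diagonal $k_F^\times$, which is scalars. The upshot is that the dual is a space of functions $f$ on vertices (or orbits), each valued in a line or zero, with a compatibility condition on $\eta$ independent of $v$ except for a parity/side dependence (via conjugation by $\varpi_L$).

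Then we write $\Phi^*$ explicitly. In the tree model $\Phi$ sums over neighbors via the maps $\sigma\to\sigma_{N}$ or $\sigma^{N}\to\sigma$. The standard formula from \cite{BL94} shows $\Phi^*$ sends $f$ to $f'(v)=\sum_{w\sim v}c\, f(w)$, where the coefficient is nonzero only from the "outward" direction when $\dim\sigma>1$, and on all neighbors when $\dim\sigma=1$. Solving $\Phi^*f=\lambda f$ orbit-by-orbit, we use that each orbit at distance $m$ has $q$ children orbits, all merging to one orbit upon restriction. In characteristic $p$ the factor $q\equiv 0$ kills contributions of children. Consequently $\lambda f(v)$ is determined by the parent value, forcing a recursion that yields either a nonsmooth/noncompatible solution or zero. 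The exception is when $\sigma$ is a character, in which case the dual space is identified with functions on the orbit tree and the "trivial" eigenvector $f\equiv\pm$ const (alternating on the two sides of $e_0$) gives eigenvalues $\lambda=\pm1$; this is the character $\chi\circ\det$ composed with the unramified twist $\lambda^{\mathrm{val}\circ\det}$, which is $\cO_L^\times$-equivariant exactly when $\eta=\sigma|_{\cO_L^\times}$.

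The main obstacle is the bookkeeping in this step: computing the $\cO_L^\times$-orbits on the tree and the precise form of $\Phi^*$ on orbit-indexed functions, including nontrivial scalars from the action of $\varpi_L$ swapping the two halves, and showing that when $\dim\sigma>1$ or $\lambda^2\ne1$ the recursion forces $f=0$. I would handle this by working on the half-tree beyond $v_0$ first, proving $f(v)=0$ for $v$ at large distance via $q\equiv0$, and then propagating inward to $e_0$. The $\lambda^2=1$ condition should then appear from the two-step relation through $\varpi_L$.
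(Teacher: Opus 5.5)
Your orbit picture is the paper's Mackey decomposition in tree language: the $H$-orbits on vertices are indexed by $k\in\mathbb Z$, and $v_0=s_0$, $v_1=s_1$ are fixed. Your observation that the children's contributions to $\Phi^*$ cancel also matches Lemma~\ref{lem:dual-hecke}. The gap is in how you then force $f=0$.

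As you note yourself, $\Hom_{\cO_L^\times}(\ind^G_{KZ}\sigma,\eta)$ is a full product over orbits. So there is no smoothness or support condition that could discard a solution as ``nonsmooth''. Moreover, for $\lambda\neq0$ the off-edge equations $\lambda f(v)=f(\mathrm{parent}(v))$ impose no constraint at all. They merely define $f$ outward from the edge: in the paper's coordinates, $a_{-n}=\lambda^{-n}a_0$ and $a_{n+1}=\lambda^{-n}a_1$ for arbitrary $a_0,a_1$. Hence your plan to prove $f(v)=0$ at large distance via $q\equiv0$ and then propagate inward cannot work. When $\lambda\neq0$, the far values vanish only as a consequence of $f(v_0)=f(v_1)=0$, never the other way round. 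Inward propagation does work for $\lambda=0$, where the same equations give $f(\mathrm{parent}(v))=0$ for every $v$.

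What is missing is the computation on the edge $e_0$, which carries all the content. The invariant lines come from \emph{opposite} unipotents on the two sides:
\begin{itemize}
\item on the $v_0$ side, $\ell_k$ is the $\vec e_{\vec r}$-coefficient ($N$-coinvariants);
\item on the $v_1$ side, $\ell_k$ is the $\vec e_{\vec0}$-coefficient ($\overline N$-coinvariants).
\end{itemize}
Your proposal treats every local line as $N$-coinvariants and never makes this distinction.

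The cross-edge parts of the Hecke operator are $\Phi_0^1([1,\vec e_{\vec j}])=[1,\delta_{\vec j,\vec r}\vec e_{\vec r}]$ and $\Phi_1^0([1,\vec e_{\vec j}])=[1,\delta_{\vec j,\vec0}\vec e_{\vec0}]$. Paired against $\ell_1$ and $\ell_0$, they give $\delta_{\vec0,\vec r}$, so the edge equations are
\[
\lambda f(v_0)=\delta_{\vec0,\vec r}f(v_1),\qquad \lambda f(v_1)=\delta_{\vec0,\vec r}f(v_0).
\]
When $\dim\sigma>1$ and $\lambda\neq0$, these give $f(v_0)=f(v_1)=0$. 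When $\dim\sigma=1$, they give $f(v_0)=\lambda^2f(v_0)$. This is the source of the condition $\lambda^2=1$; no nontrivial scalars from $\varpi_L$ enter, and $\varpi_L\notin\cO_L^\times Z$ anyway.

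If your remark that the $\Phi^*$-coefficient is nonzero only from the outward direction when $\dim\sigma>1$ was meant to encode this decoupling of $v_0$ from $v_1$, then it is exactly the step that must be proved. It yields the vanishing directly from the edge rather than from large distance.

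Two smaller corrections:
\begin{itemize}
\item The stabilizer of a vertex at distance $m$ from $e_0$ is $\cO_F^\times(1+\varpi_L^{2m+1}\cO_L)$, of index $q^m$, not $\cO_F^\times(1+\varpi_L^{m}\cO_L)$.
\item For $\lambda=-1$ the eigenvector alternates at every step, $a_k=(-1)^ka_0$, not just across $e_0$. Your identification with $\lambda^{v_F\circ\det}$ already says this.
\end{itemize}
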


The proof combines the Mackey decomposition for the restriction to $\cO_L^\times Z$ of \(\ind^G_{KZ}\sigma\) with an explicit description of the Hecke operator $\Phi$. Equivariant linear forms on \(\ind^G_{KZ}\sigma\) are described
by sequences indexed by the integers, and precomposition by $\Phi$ is given by explicit relations (Lemma \ref{lem:dual-hecke}). These relations force vanishing except in the one-dimensional cases listed in Theorem~\ref{thm:compute}. Our strategy is inspired by \cite{Chen-Wang}.

Theorem \ref{thm:compute} leads to the following consequence.

\begin{corollary} \label{cor::intro1}
Let \(\pi\) be an infinite-dimensional irreducible smooth
representation of \(G\) with a central character, and let
\(\eta:\cO_L^\times\to\FF^\times\) be a smooth character. Then
\[
 \Hom_{\cO_L^\times}(\pi|_{\cO_L^\times},\eta)=0.
\]
\end{corollary}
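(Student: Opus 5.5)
The plan is to reduce to Theorem~\ref{thm:compute} via the Barthel--Livné / Breuil classification of irreducible smooth representations with a central character. Let $\pi$ be infinite-dimensional and irreducible with central character $\omega$. After twisting by a smooth character $\chi\circ\det$ of $G$, we may arrange that $\varpi_F I_2$ acts trivially. To do this, choose $\chi$ with $\chi(\varpi_F)^2=\omega(\varpi_F)^{-1}$, possible since $\FF$ is algebraically closed.

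This twist is harmless for the statement. On $\cO_L^\times$, the twisted representation $\pi\otimes\chi\circ\det$ differs from $\pi$ only by the character $\chi\circ\det|_{\cO_L^\times}$. Hence a nonzero $\eta$-quotient of $\pi$ becomes a nonzero $\eta\cdot(\chi\circ\det)$-quotient of the twist. So it suffices to treat $\pi$ with $\varpi_F$ acting trivially, for all smooth characters $\eta$ simultaneously.

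By \cite[Proposition~32]{BL94} (existence of a weight), there is a Serre weight $\sigma$ and a nonzero $G$-map $\ind_{KZ}^G\sigma\to\pi$, which is surjective by irreducibility. The finite-dimensional space $\Hom_G(\ind_{KZ}^G\sigma,\pi)$ is a module over $\FF[\Phi]$. Pick an eigenvector for $\Phi$ with eigenvalue $\lambda$; such a vector exists because $\Hom_G(\ind\sigma,\pi)\simeq\Hom_K(\sigma,\pi)$ is finite dimensional, using admissibility of irreducibles (known for $\GL_2$ via Barthel--Livné and Breuil). Alternatively, one can argue with Schur's lemma on $\pi^{I(1)}$. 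We then get a surjection
\[
\ind_{KZ}^G\sigma/(\Phi-\lambda)\twoheadrightarrow\pi.
\]
Composing, any nonzero $\cO_L^\times$-map $\pi\to\eta$ gives a nonzero element of $\Hom_{\cO_L^\times}(\ind_{KZ}^G\sigma/(\Phi-\lambda),\eta)$. By Theorem~\ref{thm:compute}, this forces $\dim\sigma=1$, $\lambda=\pm1$, $\eta=\sigma|_{\cO_L^\times}$, and the Hom space is one-dimensional.

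It remains to exclude this last case, which I expect to be the main step. When $\dim\sigma=1$ and $\lambda^2=1$, the quotient $\ind_{KZ}^G\sigma/(\Phi-\lambda)$ is a reducible principal series. It is an extension of the special series by the character $\sigma\circ\det$ twisted by the unramified character with $\varpi_F\mapsto\lambda$, and that character is its unique one-dimensional quotient. The one-dimensional space of $\cO_L^\times$-forms on it must therefore be the form factoring through this character quotient. To prove this, I would exhibit that character form explicitly and invoke the dimension count $1$.

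Now $\pi$ is an infinite-dimensional irreducible quotient of $\ind\sigma/(\Phi-\lambda)$. The kernel of the character quotient map is the special series (Steinberg twist), and it surjects onto $\pi$ in this case. Indeed, $\pi$ cannot be the character, and every infinite-dimensional irreducible quotient has the composite map onto $\pi$ vanishing on nothing. More carefully: the surjection onto $\pi$ has a kernel, which is a proper subrepresentation of the length-two module. Hence the kernel is either the sub Steinberg, giving $\pi$ one-dimensional (a contradiction), or zero, which is impossible since the module is reducible.

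Actually, the character is the quotient and the Steinberg is the sub. So an infinite-dimensional irreducible quotient cannot exist unless the extension splits. In either event, the unique nonzero form factors through the character quotient. Therefore any $\cO_L^\times$-form on $\pi$, pulled back, factors through both $\pi$ and the character quotient. Since $\pi$ and the character share no common nonzero quotient, the form vanishes, giving $\Hom_{\cO_L^\times}(\pi|_{\cO_L^\times},\eta)=0$.
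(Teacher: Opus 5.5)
Your overall strategy is the paper's. You present $\pi$, after a twist by $\chi\circ\det$ that only changes $\eta$ by a character of $\cO_L^\times$, as a quotient of $\ind^G_{KZ}\sigma/(\Phi-\lambda)$. You then apply Theorem~\ref{thm:compute} and dispose of the exceptional case $\dim\sigma=1$, $\lambda^2=1$.

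\textbf{The justification for $\lambda$ is wrong.} You claim $\Hom_K(\sigma,\pi)$ is finite-dimensional because irreducible representations are admissible. Breuil's admissibility theorem is specific to $F=\mathbb{Q}_p$. For general $F$, admissibility of irreducible smooth representations with a central character is not available; indeed Le has constructed nonadmissible irreducible examples. Your alternative via $\pi^{I(1)}$ runs into the same finiteness problem. The conclusion you need is nevertheless true for every $F$: it is exactly \cite[Theorem~33(1)]{BL94}, which the paper cites. Replacing your argument by that citation repairs the step.

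\textbf{The exceptional case: a different but valid route.} The paper cites \cite[Theorem~33(3)]{BL94}: for $\dim\sigma=1$, $\lambda^2=1$, every irreducible quotient of $\ind^G_{KZ}\sigma/(\Phi-\lambda)$ is one-dimensional. This encodes the non-splitness of the extension of the character by the special series, so the case never arises for infinite-dimensional $\pi$.

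Your last paragraph argues differently, and it works. Put $M=\ind^G_{KZ}\sigma/(\Phi-\lambda)$.
\begin{enumerate}
\item There is a $G$-quotient $q_\chi\colon M\twoheadrightarrow\chi=\chi_0\circ\det$. By Frobenius reciprocity and the formula for $\Phi$, precomposition with $\Phi$ acts on $\Hom_G(\ind^G_{KZ}\sigma,\chi)$ by $\chi_0(\varpi_F)^{-1}$. So take $\chi_0(\varpi_F)=\lambda$; this is compatible with $\varpi_F I_2$ acting trivially precisely because $\lambda^2=1$.
\item Since $\chi|_{\cO_L^\times}=\sigma|_{\cO_L^\times}=\eta$, the map $q_\chi$ spans the one-dimensional space given by Theorem~\ref{thm:compute}.
\item For $f\in\Hom_{\cO_L^\times}(\pi,\eta)$ and $q_\pi\colon M\twoheadrightarrow\pi$, we get $f\circ q_\pi=c\,q_\chi$. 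Hence $f\circ q_\pi$ kills the $G$-stable subspace $\ker q_\pi+\ker q_\chi$.
\item The image of that subspace in $\pi$ is $0$ or $\pi$. If it were $0$, $\pi$ would be a quotient of $\chi$, which is impossible. So $f\circ q_\pi=0$, hence $f=0$.
\end{enumerate}
This uses only the dimension count and the existence of the character quotient, so it avoids the non-splitness input from \cite{BL94}. The paper's route is shorter given that citation.

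\textbf{Clean-up.} Delete the muddled intermediate paragraph, which you yourself reverse: the special series does not ``surject onto $\pi$'', and the length-two case analysis becomes superfluous. Also spell out the $G$-stability step above. ``Share no common nonzero quotient'' only makes sense for $G$-quotients, while $f$ itself is merely $\cO_L^\times$-equivariant.
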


This corollary also yields vanishing for characters of \(L^\times\) and for invariant forms on any open pro-\(p\)
subgroup of \(\cO_L^\times\)
(Corollaries~\ref{cor:torus-vanishing} and~\ref{cor:open-subgroup}).
For \(F=\mathbb Q_p\) and $p>2$, Morra proves related vanishing results
for restrictions to \(\cO_L^\times\)
\cite[Theorem~1.5]{Morra}, allowing any quadratic extension $L  /\mathbb{Q}_p$. Our calculation treats the ramified case over arbitrary $F$ and for every prime $p$, including $p=2$.

\medskip
\noindent
\textbf{Multilinear forms.}
We next use the restriction calculation to prove Theorem \ref{thm:main}. The proof rests on a tensor-product property of pro-$p$ coinvariants. Let $P$ be a pro-$p$ group, and let $V_1,V_2$ be smooth representations of $P$. We prove (see Lemma~\ref{lem:tensor-vanishing}) that
\[
\FF\otimes_{\FF[\![P]\!]}V_1=0
 \quad\Longrightarrow\quad
\FF\otimes_{\FF[\![P]\!]}(V_1\otimes_{\mathbb F}V_2)=0.
\]
This extends \cite[Lemma~3.1]{Fan} from $\mathbb{Z}_p$ to arbitrary pro-$p$ groups.

To prove Theorem \ref{thm:main}, we choose an open pro-$p$ subgroup \(P \subseteq \cO_L^\times\) and reorder the factors so that \(\pi_1\) is infinite-dimensional and irreducible with a central character. Corollary~\ref{cor:open-subgroup} gives $ \Hom_{P}(\pi_1|_{P},\mathbb{F})=0$ and hence $\FF\otimes_{\FF[\![P]\!]}(\pi_1|_P)=0$.
Applying Lemma~\ref{lem:tensor-vanishing} to \(V_1=\pi_1|_{P}\) and
\(V_2=(\pi_2\otimes\cdots\otimes\pi_n)|_{P}\), with
\(V_2=\FF\) when \(n=1\), we deduce that the tensor product has no nonzero \(P\)-invariant
linear form, and hence no nonzero \(G\)-invariant linear form.

Gabriel Dospinescu also pointed out that Theorem~\ref{thm:main} can be deduced directly from our restriction results by smoothness; see Remark~\ref{rem:dospinescu}.

\bigskip

The paper is organized as follows. In \S\ref{sec:restriction}, we
compute the spaces of equivariant linear forms for
restrictions to \(\mathcal{O}_L^\times\) and derive their
vanishing consequences. In  \S\ref{sec:multilinear}, we establish the
tensor-product property for pro-\(p\) coinvariants and
apply it to invariant multilinear forms.

\subsection*{Acknowledgements}

We thank Yikun Fan and Vytautas Pa{\v{s}}k{\={u}}nas for sending us the preprint \cite{Fan} and for helpful discussions. We also thank Gabriel Dospinescu for his helpful comments on an earlier version of this paper and for pointing out how to deduce Theorem~\ref{thm:main} directly from our restriction results; see Remark~\ref{rem:dospinescu}. This work is supported by National Key R\&D Program of China 2023YFA1009702, National Natural Science Foundation of China (Grant No. 12371011), and the Beijing Natural Science Foundation (Grant No. 1262003).
The authors use ChatGPT 6 to help with language editing and proofreading.

\section{Restriction to the unit group of a quadratic extension}
\label{sec:restriction}

Let \(L/F\) be a ramified quadratic extension. Write \(\cO_F\) and \(\cO_L\) for the rings of integers
of $F$ and $L$, respectively. Fix a uniformizer \(\varpi_F\) of \(F\). Let \(v_F\) be the valuation on $F$ such that
\(v_F(\varpi_F)=1\). Let $\varpi_L$ be a uniformizer of $L$. The minimal polynomial of \(\varpi_L\)
over \(F\) is Eisenstein, so $\cO_L=\cO_F[\varpi_L]$ and 
\begin{equation}\label{eq:uniformizer}
 \varpi_L^2=\tau\varpi_L+u\varpi_F
\end{equation}
for $\tau\in\varpi_F\cO_F$ and $u\in\cO_F^\times$. We use the embedding
\begin{equation}\label{eq:iota}
\begin{aligned}
 \iota\colon L&\longrightarrow M_2(F),\\
 a+b\varpi_L &\longmapsto
 \begin{pmatrix}a&b\\u\varpi_F b&a+\tau b\end{pmatrix}.
\end{aligned}
\end{equation}
We identify \(L^\times\) with its image. Then  \(\cO_L^\times\subset K:=\GL_2(\cO_F)\). Let $Z$ be the center of $G$. Set $H=\cO_L^\times Z$. Then \([L^\times:H]=2\).
Let \(k_F=\cO_F/\varpi_F\cO_F\). Define $q=|k_F|$ and \(f=\log_pq\). Fix an
embedding \(k_F\hookrightarrow\FF\).
For \(a\in\cO_F\), write \(\bar a\) for its residue in $k_F$ (also in $\FF$), and for
\(t\in k_F\), we write \([t]\) for its Teichm\"uller lift, with \([0]=0\).

\subsection{The Serre weight and the Hecke operator}
\label{subsec:hecke}
A Serre weight is an irreducible \(\FF\)-representation of
\(\GL_2(k_F)\), inflated to \(K\). We extend it to \(KZ\) by requiring
\(\varpi_F I_2\) to act trivially.

For \(\vec r=(r_1,\ldots,r_f)\in\{0,\ldots,p-1\}^f\), let
\(\sigma_{\vec r}\) be the representation with underlying space
\[
 V_{\sigma_{\vec r}}=
 \bigotimes_{i=1}^f\Sym^{r_i}(\FF^2).
\]
We realize the \(i\)-th factor as the space of homogeneous polynomials of
degree \(r_i\) in \(x_i,y_i\), with action
\[
 \begin{pmatrix}a&b\\c&d\end{pmatrix}
 \cdot\bigotimes_{i=1}^f f_i(x_i,y_i)
 =\bigotimes_{i=1}^f
 f_i\left(\bar a^{p^{i-1}}x_i+\bar c^{p^{i-1}}y_i,
          \bar b^{p^{i-1}}x_i+\bar d^{p^{i-1}}y_i\right).
\]
Every Serre weight has the form
\(\sigma_{\vec r}\otimes\det^m\), where \(0\le m\le p^f-2\);
see \cite[Proposition 1]{BL94}.

For the following computations, assume \(\sigma=\sigma_{\vec r}\).

For $\vec{j}=(j_1,\ldots,j_f)$, we write $0\le\vec{j}\le\vec{r}$ if $0\le j_i\le r_i$ for all $1\le i \le f$. For $0\le\vec{j}\le\vec{r}$, let 
\(
 \vec e_{\vec j}=\bigotimes_{i=1}^f x_i^{r_i-j_i}y_i^{j_i}
\).
Then the vectors \(\vec e_{\vec j}\) form a basis of \(V_\sigma\).
Define $ R(\vec j)=j_1+j_2p+\cdots+j_fp^{f-1}$. Let \(\zeta\) be the central character of $\sigma$. Then for $a\in\cO_F^{\times}$, 
\(\zeta(a)=\bar a^{R(\vec r)}
\).
Define the projections
\[
 \mathbf X\left(\sum_{\vec j}c_{\vec j}\vec e_{\vec j}\right)
   =c_{\vec0}\vec e_{\vec0},\qquad
 \mathbf Y\left(\sum_{\vec j}c_{\vec j}\vec e_{\vec j}\right)
   =c_{\vec r}\vec e_{\vec r}.
\]

Let \(\ind^G_{KZ}\sigma\) be the space of locally constant functions
\(f:G\to V_\sigma\) with support compact modulo \(KZ\), satisfying
\[f(kg)=\sigma(k)f(g),\; \forall k\in KZ,\; \forall g \in G.\] 
The group $G$ acts on \(\ind^G_{KZ}\sigma\) by right translations: \((gf)(g')=f(g'g)\).
For \(g\in G\) and \(v\in V_\sigma\), let \([g,v]\) be the function
supported on \(KZg^{-1}\) with \([g,v](g^{-1})=v\).
Then \([gk,v]=[g,\sigma(k)v]\) for all \(k\in KZ\).
\begin{proposition}[Barthel--Livn\'e]\label{prop:hecke}
There exists a unique \(\Phi\in\End_G(\ind^G_{KZ}\sigma)\) such that 
\begin{equation}\label{eq::hecke}
 \Phi([1,v])
 =\left[\begin{pmatrix}\varpi_F&0\\0&1\end{pmatrix}^{-1},
          \mathbf Y(v)\right]+\sum_{t\in k_F}
 \left[\begin{pmatrix}1&[t]\\0&\varpi_F\end{pmatrix}^{-1},
          \mathbf X\!\left(\begin{pmatrix}1&[t]\\0&1\end{pmatrix}v\right)\right]
\end{equation}
for all $v\in V_{\sigma}$. Moreover, there is an isomorphism of algebras
\[
\End_G(\ind^G_{KZ}\sigma)\cong\FF[\Phi].
\]
\end{proposition}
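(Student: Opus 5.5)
The plan is to derive Proposition~\ref{prop:hecke} from the standard description of the Hecke algebra as a convolution algebra. By Frobenius reciprocity for compact induction,
\[
\End_G(\ind^G_{KZ}\sigma)\cong\Hom_{KZ}\bigl(\sigma,(\ind^G_{KZ}\sigma)|_{KZ}\bigr).
\]
This space is in turn identified with the space \(\mathcal H(\sigma)\) of functions \(\varphi:G\to\End_\FF(V_\sigma)\) that are compactly supported modulo \(Z\) and satisfy \(\varphi(k_1gk_2)=\sigma(k_1)\varphi(g)\sigma(k_2)\). The endomorphism attached to \(\varphi\) sends \([1,v]\) to \(\sum_{g\in KZ\backslash G}[g^{-1},\varphi(g)v]\).

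By the Cartan decomposition, \(G=\bigsqcup_{n\ge0}KZ\alpha^nK\) with \(\alpha=\begin{pmatrix}\varpi_F&0\\0&1\end{pmatrix}^{-1}\) (or its inverse, depending on convention). A function in \(\mathcal H(\sigma)\) supported on one double coset is determined by \(\varphi(\alpha^n)\). This value must commute appropriately with \(K\cap\alpha^nK\alpha^{-n}\), so it lies in
\[
\Hom_{K\cap \alpha^{-n}K\alpha^n}(\sigma^{\alpha^n},\sigma).
\]
For \(n\ge1\), this intertwining condition reduces modulo \(\varpi_F\) to equivariance for the Borel subgroup \(B(k_F)\) together with a compatibility with the opposite unipotent radical. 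Using the explicit weights \(\vec e_{\vec j}\), this space is one-dimensional, spanned by the projection \(\mathbf Y\) (respectively \(\mathbf X\), depending on orientation), composed with the appropriate twist. This is precisely the computation of \cite[Proposition~8]{BL94}, whose conclusion I would use.

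To obtain the explicit formula \eqref{eq::hecke}, take \(\varphi_1\) supported on \(KZ\alpha K\), normalized by \(\varphi_1(\alpha^{-1})=\mathbf Y\) (with \(\alpha^{-1}=\operatorname{diag}(\varpi_F,1)\) in the paper's convention). Decompose
\[
K\alpha K=\bigsqcup_{t\in k_F}\begin{pmatrix}1&[t]\\0&\varpi_F\end{pmatrix}^{-1}K\ \sqcup\ \begin{pmatrix}\varpi_F&0\\0&1\end{pmatrix}^{-1}K
\]
into its \(q+1\) right cosets. Evaluating \(\varphi_1\) on each coset representative via the bi-equivariance gives the terms shown. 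The coset representatives \(\begin{pmatrix}1&[t]\\0&\varpi_F\end{pmatrix}\) are related to \(\operatorname{diag}(\varpi_F,1)\) by multiplication by elements of \(K\) that are Weyl-element twists. Tracking the action through \(\sigma\) converts \(\mathbf Y\) into \(\mathbf X\circ\begin{pmatrix}1&[t]\\0&1\end{pmatrix}\). Uniqueness of \(\Phi\) is immediate, because \([1,v]\) generates \(\ind^G_{KZ}\sigma\) as a \(G\)-representation.

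For the algebra structure, the plan is to show that \(T_n\), the operator supported on \(KZ\alpha^nK\), satisfies \(T_1T_n=T_{n+1}+c\,T_{n-1}\) with \(c\) a scalar. In characteristic \(p\), \(c=0\) unless \(\sigma\) is one-dimensional, in which case it is a \(q\)-multiple and hence also vanishes. This recursion shows that the \(T_n\) are polynomials in \(\Phi=T_1\) of exact degree \(n\), so \(\mathcal H(\sigma)=\FF[\Phi]\). Since the \(T_n\) are linearly independent, \(\Phi\) is transcendental, and the isomorphism with the polynomial ring follows.

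The main obstacle is the support computation in the convolution \(T_1 * T_n\), specifically the contribution landing on \(KZ\alpha^{n-1}K\). This requires summing over \(k_F\) the terms \(\mathbf X\bigl(\begin{pmatrix}1&[t]\\0&1\end{pmatrix}\cdot\bigr)\mathbf Y\), which gives a sum \(\sum_t t^{R(\vec r)}\). That sum vanishes unless \(\vec r=\vec 0\) or \(\vec r=(p-1,\ldots,p-1)\), and even in those cases the resulting coefficient vanishes mod \(p\). I would follow Barthel–Livné's computation for this step rather than redo it.
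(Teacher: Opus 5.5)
Your proposal is correct and follows the same route as the paper. The paper's proof cites Barthel--Livn\'e (\S3, Proposition~8 and Equation~(19)) for the existence of $\Phi$ and for $\End_G(\ind^G_{KZ}\sigma)\cong\FF[\Phi]$, and gets uniqueness, as you do, from the fact that $[1,V_\sigma]$ generates $\ind^G_{KZ}\sigma$; your Hecke-algebra sketch unpacks that citation.

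One side claim is wrong but harmless. The coefficient $c$ of $T_{n-1}$ is governed by $\mathbf X\mathbf Y$ and $\mathbf Y\mathbf X$, not by $\sum_{t}t^{R(\vec r)}$ (which in any case equals $-1$ for $\vec r=(p-1,\ldots,p-1)$). For $n=1$ and $\vec r=\vec0$ it equals $q+1\equiv 1$, so $T_1^2=T_2+T_0$ and $c\neq 0$ there. The recursion $T_{n+1}=T_1T_n-cT_{n-1}$ still gives exact degree $n+1$ for any scalar $c$, so your conclusion stands.
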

\begin{proof}
The operator $\Phi$ is the operator $T$ defined in
\cite[\S 3]{BL94}. See also \cite[Proposition 8 and Equation (19)]{BL94}. Since $\ind^G_{KZ}\sigma$ is generated by $[1,V_{\sigma}]$ as a $G$-representation, $\Phi$ is uniquely determined by \eqref{eq::hecke}. 
\end{proof}

\subsection{Mackey decomposition}\label{subsec:mackey}

For \(k\in\mathbb Z\), define
\(
 s_k=\bigl(\begin{smallmatrix}\varpi_F^k&0\\0&1\end{smallmatrix}\bigr).
\)
\begin{lemma}\label{lem:cosets}
The matrices \(s_k\), for \(k\in\mathbb Z\), form a set of
representatives for \(KZ\backslash G/H\).
\end{lemma}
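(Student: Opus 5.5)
We must show that every double coset \(KZgH\) contains exactly one \(s_k\). Since \(Z\subset H\), this amounts to understanding \(K\backslash G/\cO_L^\times Z\). The plan is to use the action of \(G\) on lattices, or equivalently on vertices of the Bruhat--Tits tree \(\mathcal T\) of \(\GL_2(F)\). The coset space \(G/KZ\) is identified with homothety classes of \(\cO_F\)-lattices in \(F^2\), and \(K\) is the stabilizer of the standard class \([\cO_F^2]\). Inverting, \(KZ\backslash G/H\) is in bijection with \(H\)-orbits on \(G/KZ\), i.e.\ on vertices of \(\mathcal T\), via \(KZgH\mapsto H g^{-1}KZ\). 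With \(g=s_k\), this sends \(s_k\) to the vertex \(v_k=[\varpi_F^{-k}\cO_F\oplus\cO_F]\). So it suffices to show that the \(v_k\), \(k\in\mathbb Z\), form a complete set of representatives for the \(H\)-orbits, equivalently the \(\cO_L^\times\)-orbits, on vertices.

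\textbf{Step 1: distance invariant.} The vertices \(v_k\) lie on the standard apartment. The vertex \(v_0\) is fixed by \(\cO_L^\times\subset K\). Since \(\cO_L^\times\) acts by tree automorphisms fixing \(v_0\), the distance \(d(v_0,\cdot)\) is an orbit invariant. Because \(d(v_0,v_k)=|k|\), only \(v_k\) and \(v_{-k}\) can lie in the same orbit.

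\textbf{Step 2: separating \(v_k\) from \(v_{-k}\).} The path from \(v_0\) toward \(v_k\) for \(k>0\) passes through the neighbor \(v_1=[\varpi_F^{-1}\cO_F\oplus\cO_F]\), which corresponds to the line \(\bar e_1\) in \(k_F^2\). The path toward \(v_{-k}\) passes through the neighbor corresponding to the line \(\bar e_2\). The reduction of \(\iota(a+b\varpi_L)\) is \(\begin{pmatrix}\bar a&\bar b\\0&\bar a\end{pmatrix}\), since \(\tau,\varpi_F\in\varpi_F\cO_F\). These unipotent-times-scalar matrices act on the \(q+1\) neighbors of \(v_0\) by fixing exactly one line and permuting the other \(q\) lines transitively. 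One checks which line is fixed in the row-vector convention used here; in either convention, one of \(\bar e_1,\bar e_2\) is fixed and the other is not. Hence \(v_1\) and \(v_{-1}\) lie in different orbits, and therefore so do \(v_k\) and \(v_{-k}\), since the first step of a geodesic from \(v_0\) is preserved.

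\textbf{Step 3: exhaustion.} We must show that every vertex at distance \(n\) from \(v_0\) lies in the orbit of \(v_n\) or \(v_{-n}\). There are \((q+1)q^{n-1}\) such vertices. The fixed neighbor, say \(w\), has a branch of \(q^{n-1}\) vertices at distance \(n\); the other branches, containing the \(q\) moved neighbors, contain \(q\cdot q^{n-1}\) vertices. The cleanest route is by orbit–stabilizer. Let \(U_L^{(m)}=1+\varpi_L^m\cO_L\). The pointwise stabilizer in \(\cO_L^\times\) of the geodesic \(v_0,\dots,v_{\pm n}\) should be \(\cO_F^\times U_L^{(m)}\) for an explicit \(m\approx 2n\) or \(2n-1\), which can be checked by computing when \(\iota(a+b\varpi_L)\) stabilizes the lattice \(\varpi_F^{\mp n}\cO_F\oplus\cO_F\) in terms of \(v_F(b)\). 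Since \(\cO_L^\times/\cO_F^\times U_L^{(m)}\) has order \(q^{\lceil m/2\rceil}\) roughly, the orbit sizes can be matched against \(q^{n-1}\) and \(q^n\). The orbit of \(v_n\) would then have size \(q^{n-1}\) or \(q^n\), with the orbit of \(v_{-n}\) having the complementary size, and together they exhaust the sphere of radius \(n\).

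The main obstacle is Step 3: getting the index computation right, namely the exact stabilizer \(\{a+b\varpi_L:\ v_F(b)\ge n\text{ or }n-1\}\) up to units, which depends on the sign of \(k\). Alternatively, one can argue directly that the action on the branch beyond the moved neighbors is transitive, using \(b\) with prescribed valuation.
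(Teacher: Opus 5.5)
\textbf{Gap: Step~3 (existence) is not carried out.} Your reduction to \(\cO_L^\times\)-orbits on vertices of the Bruhat--Tits tree is sound. Steps~1 and~2 are a complete and rather clean proof of the uniqueness half. The element \(\iota(a+b\varpi_L)\) reduces to \(\bigl(\begin{smallmatrix}\bar a&\bar b\\0&\bar a\end{smallmatrix}\bigr)\), which fixes the line \(k_F\bar e_1\) and moves \(k_F\bar e_2\); so \(v_1\) is fixed by all of \(H\), matching \(H_1=H\). Any element fixing \(v_0\) carries the geodesic from \(v_0\) to \(v_k\) onto the geodesic from \(v_0\) to its image, so the first edge is an invariant.

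Step~3, however, is the whole existence half of the lemma and the only part requiring real work. As written it is only a plan (``should be'', ``roughly'', ``would then''), and your provisional numbers do not close the count. In fact \([\cO_L^\times:\cO_F^\times U_L^{(m)}]=q^{\lfloor m/2\rfloor}\), not \(q^{\lceil m/2\rceil}\). With your formula and \(m=2n-1,\,2n\) you would get two orbits of size \(q^n\), totalling \(2q^n\ne(q+1)q^{n-1}\). Your final claim that the orbit sizes are \(q^{n-1}\) and \(q^n\) is asserted, not derived.

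\textbf{How to fill it.} Since \(\iota(a+b\varpi_L)\in K\) has unit determinant, it fixes the class of a lattice \(\Lambda\) if and only if \(\iota(a+b\varpi_L)\Lambda\subseteq\Lambda\).
For \(\Lambda=\varpi_F^{-n}\cO_F\oplus\cO_F\) with \(n\ge1\), the only condition comes from the entry \(u\varpi_F b\) and reads \(v_F(b)\ge n-1\).
For \(\Lambda=\varpi_F^{n}\cO_F\oplus\cO_F\), it comes from the entry \(b\) and reads \(v_F(b)\ge n\).
These are \(H_{\pm n}\cap\cO_L^\times\), cf.\ Lemma~\ref{lem:subgroups}.

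Now let \(S_m=\{a+b\varpi_L\in\cO_L^\times: v_F(b)\ge m\}\). It is the preimage of \((\cO_F/\varpi_F^m)^\times\) in \((\cO_L/\varpi_F^m\cO_L)^\times\). So for \(m\ge1\) its index is \((q-1)q^{2m-1}/\bigl((q-1)q^{m-1}\bigr)=q^m\). Hence the orbits of \(v_n\) and \(v_{-n}\) have sizes \(q^{n-1}\) and \(q^n\). They are disjoint by Step~2, and their sizes add up to \((q+1)q^{n-1}\), the number of vertices at distance \(n\) from \(v_0\). So they exhaust that sphere.

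\textbf{Comparison with the paper.} Once filled in, your argument is a genuinely different route. The paper proves existence via the Iwasawa decomposition plus explicit elements \(h\in H\) carrying \(\beta_{k,n}\) to \(s_k\) or \(s_{k-2v_F(n)}\). It proves uniqueness by comparing \(v_F(\det)\) with integrality of the diagonal entries. You replace the uniqueness argument by geometric invariants (distance to \(v_0\) and first edge). You replace the existence argument by a stabilizer-and-counting argument, which avoids having to find the elements \(h\), at the cost of relying on the finiteness of spheres in the tree.
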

\begin{proof}
For $g_1,g_2\in G$, we write $g_1\sim g_2$ if $KZg_1 H=KZg_2 H$. Let \(B\) be the upper triangular subgroup of \(G\). Given $g\in G$, by the Iwasawa
decomposition \(G=KB\), there exists
\(\bigl(\begin{smallmatrix}a&c\\0&d\end{smallmatrix}\bigr)\in B\) such that \(\bigl(\begin{smallmatrix}a&c\\0&d\end{smallmatrix}\bigr)\sim g\).
Write \(a/d=\varepsilon\varpi_F^k\) with
\(\varepsilon\in\cO_F^\times\) and \(k\in\mathbb Z\).
Since 
\[
\begin{pmatrix}d^{-1}&0\\0&d^{-1}\end{pmatrix}\begin{pmatrix}\varepsilon^{-1}&0\\0&1\end{pmatrix}\begin{pmatrix}a&c\\0&d\end{pmatrix}=\begin{pmatrix}\varpi_F^k&n\\0&1\end{pmatrix}
\]
for $n=\varepsilon^{-1}c/d\in F$, we have 
\[
g\sim \beta_{k,n}:=\begin{pmatrix}\varpi_F^k&n\\0&1\end{pmatrix}.
\]
If $n\in \mathcal O_F$, then 
\[
 \begin{pmatrix}1&-n\\0&1\end{pmatrix}\beta_{k,n}
 =s_k,
\]
which implies $g\sim s_k$. 

It remains to deal with the  case when  $n\not\in\mathcal O_F$.
If \(v_F(n)\ge k\), set
\(h=1-\varpi_F^{-k}n(\varpi_L-\tau)\in\cO_L^\times\).
Then by \eqref{eq:uniformizer},
\begin{equation}\label{eq:coset-first}
 \beta_{k,n}\iota(h)s_k^{-1}
 =\begin{pmatrix}
 1+\tau n\varpi_F^{-k}-un^2\varpi_F^{1-2k}&0\\
 -un\varpi_F^{1-2k}&1
 \end{pmatrix}\in K.
\end{equation}
This implies  $g\sim s_k$. If \(v_F(n)\le k-1\), put
\[
 k'=k-2v_F(n),\qquad
 h=n^{-1}-u^{-1}\varpi_F^{k-1}n^{-2}\varpi_L.
\]
Since $nh$ is a unit of \(\cO_L\), the element \(h\) belongs to \(H\).
Using \eqref{eq:uniformizer} we have
\[
 \beta_{k,n}\iota(h)s_{k'}^{-1}
 =\begin{pmatrix}
 0&1-u^{-1}\varpi_F^{2k-1}n^{-2}
       -\tau u^{-1}\varpi_F^{k-1}n^{-1}\\
 -\varpi_F^{k-k'}n^{-2}&n^{-1}
       -\tau u^{-1}\varpi_F^{k-1}n^{-2}
 \end{pmatrix}\in K.
\]
This proves $g\sim s_{k'}$. Therefore, every double coset in \(KZ\backslash G/H\) has a representative of the form $s_k$ with $k\in\mathbb Z$.

For uniqueness, suppose that \(KZs_kH=KZs_lH\).
Since \(Z\subset H\), there exists $h\in H$ such that $ s_k\iota(h)s_l^{-1}\in K$. Since $\cO_L=\cO_F[\varpi_L]$, there exist \(b\in\cO_F^\times\), \(c\in\cO_F\) and $m\in \mathbb Z$ such that
\(h=\varpi_F^m(b+c\varpi_L)\). Then
\[
 s_k\iota(h)s_l^{-1}
 =\begin{pmatrix}
  \varpi_F^{k-l+m}b&\varpi_F^{k+m}c\\
  u\varpi_F^{1-l+m}c&\varpi_F^m(b+\tau c)
 \end{pmatrix}.
\]
We have $v_F(\det (s_k\iota(h)s_l^{-1}))=2m+k-l$, so \(2m+k-l=0\).
Since both \(b\) and \(b+\tau c\) are units in $\mathcal O_F$,
integrality of the diagonal entries gives \(m\ge0\) and
\(k-l+m\ge0\). Their sum is zero, hence \(m=0\) and \(k=l\).
\end{proof}

For $k\in\mathbb Z$, define $H_k=H\cap s_k^{-1}KZs_k$. Let $\sigma_k$ be the representation of $H_k$ with the same underlying space as $\sigma$, defined by $ \sigma_k(h)=\sigma(s_khs_k^{-1})$ for $h\in H_k$.
Since \(KZ\) is open in $G$, the Mackey decomposition
 gives
\begin{equation}\label{eq:mackey}
 \ind^G_{KZ}\sigma|_H\simeq\bigoplus_{k\in\mathbb Z}\ind_{H_k}^H\sigma_k,
\end{equation}
where the inclusion $\ind_{H_k}^H\sigma_k\hookrightarrow  \ind^G_{KZ}\sigma$ is determined by
\(v\mapsto[s_k^{-1},v]\).

\begin{lemma}\label{lem:subgroups}
We have 
\begin{equation}\label{eq:H_k}
 H_k=
 \begin{cases}
 (1+\varpi_L\varpi_F^{k-1}\cO_F)Z,&k\ge1,\\
 (1+\varpi_L\varpi_F^{-k}\cO_F)Z,&k\le0.
 \end{cases}
\end{equation}
In particular \(H_0=H_1=H\). For \(a\in\cO_F^\times\),
\(b\in\cO_F\), and \(n\in\mathbb Z\), we have
\begin{align*}
 \sigma_k\bigl((1+\varpi_L\varpi_F^{k-1}b)\varpi_F^n a\bigr)
 \vec e_{\vec j}
 &=\bar a^{R(\vec r)}
   \bigotimes_{i=1}^f
   \left(x_i+(\bar u\bar b)^{p^{i-1}}y_i\right)^{r_i-j_i}y_i^{j_i}
   &&(k\ge1),\\
 \sigma_k\bigl((1+\varpi_L\varpi_F^{-k}b)\varpi_F^n a\bigr)
 \vec e_{\vec j}
 &=\bar a^{R(\vec r)}
   \bigotimes_{i=1}^f
   x_i^{r_i-j_i}\left(\bar b^{p^{i-1}}x_i+y_i\right)^{j_i}
   &&(k\le0).
\end{align*}
\end{lemma}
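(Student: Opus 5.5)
The plan is to compute \(H\cap s_k^{-1}KZs_k\) directly from the matrix of a general element of \(H\), much as in the uniqueness part of Lemma~\ref{lem:cosets}.

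Every \(h\in H\) can be written as \(h=\varpi_F^n(a+c\varpi_L)\) with \(n\in\mathbb Z\), \(a\in\cO_F^\times\), \(c\in\cO_F\). Since \(Z\subset s_k^{-1}KZs_k\), only the unit part \(a+c\varpi_L\) matters, and membership reduces to asking whether
\[
 s_k\iota(a+c\varpi_L)s_k^{-1}
 =\begin{pmatrix}a&\varpi_F^{k}c\\u\varpi_F^{1-k}c&a+\tau c\end{pmatrix}
\]
lies in \(KZ\). Its determinant is a unit, so it lies in \(KZ\) if and only if it lies in \(K\). Because \(a\) and \(a+\tau c\) are units, this holds exactly when \(v_F(c)\ge -k\) and \(v_F(c)\ge k-1\).

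For \(k\ge1\) the binding condition is \(c\in\varpi_F^{k-1}\cO_F\), and for \(k\le0\) it is \(c\in\varpi_F^{-k}\cO_F\). Dividing by \(a\), the unit becomes \(a(1+\varpi_L c/a)\), so \(H_k=(1+\varpi_L\varpi_F^{k-1}\cO_F)\cO_F^\times Z=(1+\varpi_L\varpi_F^{k-1}\cO_F)Z\), because \(\cO_F^\times\subset Z\). The case \(k\le0\) is identical. At \(k=0,1\) the exponent is \(0\), and \(\cO_L^\times=\cO_F^\times(1+\varpi_L\cO_F)\) because \(\cO_L^\times=\{a+c\varpi_L: a\in\cO_F^\times\}\). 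Hence \(H_0=H_1=H\).

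For the action, \(\varpi_F^n\) acts trivially and \(a\) acts centrally by \(\zeta(a)=\bar a^{R(\vec r)}\), so we may take \(h=1+\varpi_L\varpi_F^{k-1}b\). Then
\[
 s_khs_k^{-1}=\begin{pmatrix}1&\varpi_F^{2k-1}b\\u\varpi_F^{k}\cdot\varpi_F^{-k}\varpi_F^{k-1}\cdot\varpi_F b\, \varpi_F^{-k}\varpi_F^{k}&1+\tau\varpi_F^{k-1}b\end{pmatrix},
\]
and more simply its reduction mod \(\varpi_F\) for \(k\ge1\) is \(\bigl(\begin{smallmatrix}1&0\\ \bar u\bar b&1\end{smallmatrix}\bigr)\). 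This uses \(2k-1\ge1\), the \((2,1)\)-entry \(u\varpi_F^{1-k}\varpi_F^{k-1}b=ub\), and \(\tau\in\varpi_F\cO_F\).

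For \(k\le0\) with \(c=\varpi_F^{-k}b\), the matrix is \(\bigl(\begin{smallmatrix}1&b\\u\varpi_F^{1-2k}b&1+\tau\varpi_F^{-k}b\end{smallmatrix}\bigr)\), which reduces to \(\bigl(\begin{smallmatrix}1&\bar b\\0&1\end{smallmatrix}\bigr)\).

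Substituting these matrices into the explicit action formula (\(x_i\mapsto \bar a^{p^{i-1}}x_i+\bar c^{p^{i-1}}y_i\), \(y_i\mapsto \bar b^{p^{i-1}}x_i+\bar d^{p^{i-1}}y_i\)) gives the displayed formulas. The only point needing care is the bookkeeping of valuations, where the \(k\ge1\) and \(k\le0\) regimes swap which off-diagonal entry is binding. I expect no real obstacle.
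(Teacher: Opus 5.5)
Your proposal is correct and is essentially the paper's argument written out in full: conjugate \(\iota(a+c\varpi_L)\) by \(s_k\), read off the integrality conditions \(v_F(c)\ge\max\{0,-k,k-1\}\), factor out \(a\in\cO_F^\times\subset Z\), and reduce the conjugated matrices modulo \(\varpi_F\) to get the action formulas. The only blemish is the \((2,1)\)-entry in your first displayed matrix for \(k\ge1\), which as written evaluates to \(u\varpi_F^{k}b\). You immediately state the correct entry \(ub\) and the correct reduction \(\bigl(\begin{smallmatrix}1&0\\ \bar u\bar b&1\end{smallmatrix}\bigr)\), so nothing downstream is affected.
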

\begin{proof}
An element of \(\cO_L^\times\) can be written as \(a+b\varpi_L\)
with \(a\in\cO_F^\times\) and \(b\in\cO_F\).
Conjugating by \(s_k\) shows that \((a+b\varpi_L) Z\subset H_k\) precisely when
\(v_F(b)\ge\max\{0,-k,k-1\}\). 
Factoring out \(a\) proves the descriptions of \(H_k\).
The description of the action follows from \eqref{eq:uniformizer} and \eqref{eq:iota}.
\end{proof}

\begin{samepage}
Let \(\Phi_k\) be the restriction of \(\Phi\) to the \(k\)-th
summand of \eqref{eq:mackey}.
For \(k,l\in\mathbb Z\), let \(\Phi_k^l\) be this restriction followed
by projection onto the \(l\)-th summand. Under the identification
\eqref{eq:mackey}, $\Phi_k^l$ is described by the following commutative diagram:
\[
\begin{tikzpicture}[>=stealth,baseline=(current bounding box.center)]
 \node (source) at (0,0) {\(\displaystyle\ind_{H_k}^H\sigma_k\)};
 \node (sumsource) at (6.6,0)
  {\(\displaystyle\bigoplus_{j\in\mathbb Z}\ind_{H_j}^H\sigma_j\)};
 \node (target) at (0,-1.8) {\(\displaystyle\ind_{H_l}^H\sigma_l\)};
 \node (sumtarget) at (6.6,-1.8)
  {\(\displaystyle\bigoplus_{j\in\mathbb Z}\ind_{H_j}^H\sigma_j\)};
 \draw[->] (source) -- node[above] {inclusion} (sumsource);
 \draw[->] (sumsource) -- node[right] {\(\Phi\)} (sumtarget);
 \draw[->] (sumtarget) -- node[below] {projection} (target);
 \draw[->] (source) -- node[left] {\(\Phi_k^l\)} (target);
\end{tikzpicture}
\]
\end{samepage}
By abuse of notation, in the representation $\ind_{H_k}^H\sigma_k$, for $g\in H$ and $v\in\sigma_k$, we also write \([g,v]\) for the function
supported on \(H_kg^{-1}\) with \([g,v](g^{-1})=v\).
\begin{lemma}\label{lem:local-hecke}
For $k\in \mathbb Z$, we have \(\Phi_k=\Phi_k^{k+1}+\Phi_k^{k-1}\),
where the target summands are identified with their images in \eqref{eq:mackey}.
\hfill\begin{enumerate}
\item If \(k\ge1\),
\begin{align*}
 \Phi_k^{k+1}([1,\vec e_{\vec j}])
 &=\sum_{t\in k_F}
 [1-\varpi_L\varpi_F^{k-1}[t],
       (\bar u t)^{R(\vec r)-R(\vec j)}\vec e_{\vec r}],\\
 \Phi_k^{k-1}([1,\vec e_{\vec j}])
 &=[1,\delta_{\vec j,\vec0}\vec e_{\vec0}].
\end{align*}
\item If \(k\le0\),
\begin{align*}
 \Phi_k^{k+1}([1,\vec e_{\vec j}])
 &=[1,\delta_{\vec j,\vec r}\vec e_{\vec r}],\\
 \Phi_k^{k-1}([1,\vec e_{\vec j}])
 &=\sum_{t\in k_F}
 [1-\varpi_L\varpi_F^{-k}[t],t^{R(\vec j)}\vec e_{\vec0}].
\end{align*}
\end{enumerate}
Here \(\delta\) denotes the Kronecker symbol, and we use the convention $0^0=1$.

\end{lemma}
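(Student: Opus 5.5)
The plan is to compute $\Phi$ on a generator of the $k$-th Mackey summand directly from Proposition~\ref{prop:hecke}, using $G$-equivariance. Then each resulting term is sorted into a summand of \eqref{eq:mackey} by the double coset manipulations in the proof of Lemma~\ref{lem:cosets}.

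\textbf{Step 1: $\Phi$ on a generator of the $k$-th summand.} First note that $h[g,v]=[hg,v]$: indeed $[g,v](xh)\neq 0$ exactly when $x\in KZ(hg)^{-1}$. Since the $k$-th summand is spanned over $H$ by the vectors $[s_k^{-1},v]$, and $\Phi$ is $H$-equivariant, it suffices to compute $\Phi([s_k^{-1},\vec e_{\vec j}])$. We have $[s_k^{-1},\vec e_{\vec j}]=s_k^{-1}[1,\vec e_{\vec j}]$. Put $\alpha=\bigl(\begin{smallmatrix}\varpi_F&0\\0&1\end{smallmatrix}\bigr)$ and $\beta_t=\bigl(\begin{smallmatrix}1&[t]\\0&\varpi_F\end{smallmatrix}\bigr)$. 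Then
\[
\Phi([s_k^{-1},\vec e_{\vec j}])=[(\alpha s_k)^{-1},\mathbf Y(\vec e_{\vec j})]+\sum_{t\in k_F}\Bigl[(\beta_t s_k)^{-1},\mathbf X\bigl(\bigl(\begin{smallmatrix}1&[t]\\0&1\end{smallmatrix}\bigr)\vec e_{\vec j}\bigr)\Bigr].
\]

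\textbf{Step 2: the two easy terms.} Since $\alpha s_k=s_{k+1}$, the first term is $[s_{k+1}^{-1},\delta_{\vec j,\vec r}\vec e_{\vec r}]$. This is the element $[1,\delta_{\vec j,\vec r}\vec e_{\vec r}]$ of the $(k+1)$-st summand.

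For $t=0$ we have $\beta_0 s_k=\varpi_F s_{k-1}$, and $\varpi_F$ acts trivially. So this term is $[1,\mathbf X(\vec e_{\vec j})]=[1,\delta_{\vec j,\vec 0}\vec e_{\vec 0}]$ in the $(k-1)$-st summand.

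\textbf{Step 3: the terms with $t\neq0$.} Here $\beta_t s_k=\varpi_F\beta_{k-1,n}$ with $n=[t]\varpi_F^{-1}$, so $v_F(n)=-1$. I would rerun the proof of Lemma~\ref{lem:cosets}.

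If $k\le0$, then $v_F(n)\ge k-1$, and the first case applies with $h=1-\varpi_F^{-k}[t](\varpi_L-\tau)$. Then $\beta_{k-1,n}h=\kappa s_{k-1}$ with $\kappa\in K$ the matrix in \eqref{eq:coset-first}, which is lower triangular. This gives
\[
[(\beta_ts_k)^{-1},w]=[h\,s_{k-1}^{-1}\kappa^{-1},w]=[h\,s_{k-1}^{-1},\sigma(\kappa)^{-1}w],
\]
hence the element $[h,\sigma(\kappa)^{-1}w]$ of the $(k-1)$-st summand. In this term $w=\mathbf X\bigl(\bigl(\begin{smallmatrix}1&[t]\\0&1\end{smallmatrix}\bigr)\vec e_{\vec j}\bigr)=t^{R(\vec j)}\vec e_{\vec 0}$. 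Since $\kappa$ is lower triangular with unit diagonal mod $\varpi_F$, it fixes $\vec e_{\vec0}$ up to the residue of its $(1,1)$-entry, which is $1$.

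If $k\ge1$, the second case applies with $k'=k+1$, and $\kappa$ is antidiagonal mod $\varpi_F$. The same bookkeeping, using $h=n^{-1}-u^{-1}\varpi_F^{k-1}n^{-2}\varpi_L$, yields a term in the $(k+1)$-st summand. In it, $\sigma(\kappa)^{-1}$ sends $\vec e_{\vec0}$ to a multiple of $\vec e_{\vec r}$. Combining this with $t^{R(\vec j)}$ should produce the coefficient $(\bar ut)^{R(\vec r)-R(\vec j)}$, using $t^{p^f-1}=1$ and the central-character factor $\bar a^{R(\vec r)}$.

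\textbf{Step 4: normalizing $h$ (the main obstacle).} It remains to replace $h$ by $1-\varpi_L\varpi_F^{-k}[t]$, respectively $1-\varpi_L\varpi_F^{k-1}[t]$. For this I would factor out from $h$ an element of $\cO_F^\times Z$ and show that the remaining discrepancy lies in $H_{k\mp1}$ as described in Lemma~\ref{lem:subgroups}. Then the explicit formulas for $\sigma_{k\mp1}$ in that lemma show that this discrepancy acts trivially on $\vec e_{\vec 0}$ (for $k\le0$) or $\vec e_{\vec r}$ (for $k\ge1$), up to the central factor.

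This bookkeeping is where I expect the difficulty. One has to track the residues of $\kappa$ and of the unit factored out of $h$ precisely enough to obtain exactly $t^{R(\vec j)}$ and $(\bar ut)^{R(\vec r)-R(\vec j)}$, including the convention $0^0=1$ and the factor $\bar u$ coming from \eqref{eq:uniformizer}.
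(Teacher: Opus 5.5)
Your route is essentially the paper's. You compute $\Phi([s_k^{-1},\vec e_{\vec j}])=s_k^{-1}\Phi([1,\vec e_{\vec j}])$ and write each $(\beta_ts_k)^{-1}$ as $\iota(h)s_{k\mp1}^{-1}\kappa^{-1}$ with $\kappa\in KZ$. The difference is that the paper picks the normalized element of $\cO_L^\times$ from the outset and absorbs the rest into an explicit matrix of $K$: one that is upper unipotent mod $\varpi_F$ for $k\le0$, and $C_{k,t}$ for $k\ge1$. You instead take $h$ from the proof of Lemma~\ref{lem:cosets} and normalize afterwards.

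For $k\le0$ your plan works. Your $h=1+\tau\varpi_F^{-k}[t]-\varpi_L\varpi_F^{-k}[t]$ differs from $1-\varpi_L\varpi_F^{-k}[t]$ by an element of $(1+\varpi_F\cO_F)(1+\varpi_L\varpi_F^{1-k}\cO_F)\subset H_{k-1}$, and this element fixes $\vec e_{\vec0}$ by Lemma~\ref{lem:subgroups}. However, your reason for $\sigma(\kappa)^{-1}\vec e_{\vec0}=\vec e_{\vec0}$ is wrong. A lower triangular matrix sends $x_i$ to $\bar a^{p^{i-1}}x_i+\bar c^{p^{i-1}}y_i$, so it stabilizes the line of $\vec e_{\vec r}$, not that of $\vec e_{\vec0}$. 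The conclusion holds only because, for $k\le0$, every entry of $\kappa-1$ lies in $\varpi_F\cO_F$.

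The genuine gap is Step~4 for $k\ge1$: replacing $h$ by $1-\varpi_L\varpi_F^{k-1}[t]$ fails term by term. Here $h=\varpi_F[t]^{-1}\bigl(1-u^{-1}[t]^{-1}\varpi_F^{k-1}\varpi_L\bigr)$. For $b,b'\in\cO_F$, the elements $1+b\varpi_L$ and $1+b'\varpi_L$ lie in the same coset of $H_{k+1}=(1+\varpi_L\varpi_F^{k}\cO_F)Z$ if and only if $b\equiv b'\pmod{\varpi_F^k}$. Hence $h\in(1-\varpi_L\varpi_F^{k-1}[a])H_{k+1}$ with $a=(\bar ut)^{-1}$. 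This is not the coset of $1-\varpi_L\varpi_F^{k-1}[t]$ unless $\bar ut^2=1$. Accordingly, no use of $t^{p^f-1}=1$ or of the central character turns the coefficient of the $t$-th term into $(\bar ut)^{R(\vec r)-R(\vec j)}$.

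What is missing is a change of summation variable. Since $\kappa\equiv\bigl(\begin{smallmatrix}0&1\\-t^{-2}&0\end{smallmatrix}\bigr)\pmod{\varpi_F}$, one has $\sigma(\kappa)^{-1}\vec e_{\vec0}=\vec e_{\vec r}$ exactly. The central factor $\varpi_F[t]^{-1}$ contributes $\zeta([t]^{-1})=t^{-R(\vec r)}$. So the $t$-th term equals $[1-\varpi_L\varpi_F^{k-1}[a],\,t^{R(\vec j)-R(\vec r)}\vec e_{\vec r}]=[1-\varpi_L\varpi_F^{k-1}[a],\,(\bar ua)^{R(\vec r)-R(\vec j)}\vec e_{\vec r}]$. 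The map $t\mapsto(\bar ut)^{-1}$ permutes $k_F^\times$. Summing over $t$ and adding the $\mathbf Y$-term of Step~2 as the summand $a=0$ (using $0^{R(\vec r)-R(\vec j)}=\delta_{\vec j,\vec r}$) then gives the stated formula for $\Phi_k^{k+1}$.

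So the factor $\bar u$ comes from the $u^{-1}$ in the $\varpi_L$-coefficient of $h$, through this reindexing. This is exactly the paper's choice $a=(\bar ut)^{-1}$ in $C_{k,t}$.
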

\begin{proof}
For each $t\in k_F$ and each $0\le\vec{j}\le\vec{r}$, we have
\[
 \mathbf X\!\left(\begin{pmatrix}1&[t]\\0&1\end{pmatrix}\vec e_{\vec j}\right)=t^{R(\vec j)}\vec e_{\vec0},
 \qquad
 \mathbf Y(\vec e_{\vec j})=\delta_{\vec j,\vec r}\vec e_{\vec r}.
\]
By \(G\)-equivariance and \eqref{eq::hecke},
\[
\begin{aligned}
 \Phi([s_k^{-1},\vec e_{\vec j}])&=s_k^{-1}\Phi([1,\vec e_{\vec j}])\\
 &=\sum_{t\in k_F}
 \left[\begin{pmatrix}\varpi_F^{k-1}&\varpi_F^{-1}[t]\\0&1\end{pmatrix}^{-1},
      \mathbf X\!\left(\begin{pmatrix}1&[t]\\0&1\end{pmatrix}\vec e_{\vec j}\right) \right]+[s_{k+1}^{-1},  \mathbf Y(\vec e_{\vec j})]\\
 &=\sum_{t\in k_F}
 \left[\begin{pmatrix}\varpi_F^{k-1}&\varpi_F^{-1}[t]\\0&1\end{pmatrix}^{-1},
       t^{R(\vec j)}\vec e_{\vec0}\right]+[s_{k+1}^{-1},\delta_{\vec j,\vec r}\vec e_{\vec r}].
\end{aligned}
\]
If \(k\le0\), a direct matrix calculation gives
\begin{align*}
\begin{pmatrix}\varpi_F^{k-1}&\varpi_F^{-1}[t]\\0&1\end{pmatrix}^{-1}
=
 \iota(1-\varpi_L\varpi_F^{-k}[t])s_{k-1}^{-1}
 \begin{pmatrix}
 1-u\varpi_F^{1-2k}[t]^2&-\tau\varpi_F^{-k-1}[t]^2\\
 -u\varpi_F^{2-2k}[t]&1-\tau\varpi_F^{-k}[t]
 \end{pmatrix}^{-1},
\end{align*}
where the last factor belongs to \(K\) and reduces to an upper unipotent
matrix modulo \(\varpi_F\), because \(\tau\in\varpi_F\cO_F\).
Since upper unipotent
matrices fix \(\vec e_{\vec0}\), we have, for \(k\le0\), 
\begin{equation}
 \Phi([s_k^{-1},\vec e_{\vec j}])
 =\sum_{t\in k_F}
 [\iota(1-\varpi_L\varpi_F^{-k}[t])s_{k-1}^{-1},
       t^{R(\vec j)}\vec e_{\vec0}]+[s_{k+1}^{-1},\delta_{\vec j,\vec r}\vec e_{\vec r}]\label{eq:negative-hecke}.
\end{equation}

Suppose now \(k>0\). 
For \(t\in k_F^\times\), we have
\[
 \begin{pmatrix}\varpi_F^{k-1}&\varpi_F^{-1}[t]\\0&1\end{pmatrix}^{-1}
 =\iota(1-\varpi_L\varpi_F^{k-1}[a])s_{k+1}^{-1}
 (\varpi_F I_2)C_{k,t}^{-1},
\]
where \(a=(\bar u t)^{-1}\) and
\[
 C_{k,t}=\begin{pmatrix}
  \varpi_F^{-1}(1-u[t][a])
   &[t]-\tau\varpi_F^{k-1}[t][a]-\varpi_F^{2k-1}[a]\\
  -u[a]&\varpi_F-\tau\varpi_F^k[a]
 \end{pmatrix}.
\]
Since \(u[t][a]\equiv1\pmod{\varpi_F}\), the entries of \(C_{k,t}\) are integral. We have
\[
 \overline{C}_{k,t}=\begin{pmatrix}*&t\\-\bar u a&0\end{pmatrix},
\]
whose determinant is \(\bar u a t=1\). Hence \(C_{k,t}\in K\),
and the first column of \(\overline{C_{k,t}^{-1}}\) is
\(\binom{0}{\bar u a}\). Therefore
\(\sigma(C_{k,t}^{-1})\vec e_{\vec0}
=(\bar u a)^{R(\vec r)}\vec e_{\vec r}\). Since \(\varpi_F I_2\)
acts trivially and \(t=(\bar u a)^{-1}\), we obtain
\[
\begin{aligned}
 \left[\begin{pmatrix}\varpi_F^{k-1}&\varpi_F^{-1}[t]\\0&1\end{pmatrix}^{-1},
 t^{R(\vec j)}\vec e_{\vec0}\right]=\left[\iota(1-\varpi_L\varpi_F^{k-1}[a])s_{k+1}^{-1},
 (\bar u a)^{R(\vec r)-R(\vec j)}\vec e_{\vec r}\right].
\end{aligned}
\]
When $t=0$,
\[
\begin{aligned}
 \left[\begin{pmatrix}\varpi_F^{k-1}&\varpi_F^{-1}[t]\\0&1\end{pmatrix}^{-1},
 t^{R(\vec j)}\vec e_{\vec0}\right]=[s_{k-1}^{-1},\delta_{\vec j,\vec0}\vec e_{\vec0}].
\end{aligned}
\]
Therefore
\[
\begin{aligned}
  \Phi([s_k^{-1},\vec e_{\vec j}])
 &=[s_{k-1}^{-1},\delta_{\vec j,\vec0}\vec e_{\vec0}]+\sum_{t\in k_F^{\times}}
 [\iota(1-\varpi_L\varpi_F^{k-1}[t])s_{k+1}^{-1},
       (\bar u t)^{R(\vec r)-R(\vec j)}\vec e_{\vec r}]+[s_{k+1}^{-1},\delta_{\vec j,\vec r}\vec e_{\vec r}]\\
 &=[s_{k-1}^{-1},\delta_{\vec j,\vec0}\vec e_{\vec0}]+\sum_{t\in k_F}
 [\iota(1-\varpi_L\varpi_F^{k-1}[t])s_{k+1}^{-1},
       (\bar u t)^{R(\vec r)-R(\vec j)}\vec e_{\vec r}].
\end{aligned}
\]
This proves the remaining formulas.
\end{proof}

\subsection{Equivariant linear forms}
\label{subsec:dual}

Continue to assume \(\sigma=\sigma_{\vec r}\), and let \(\zeta\)
be the central character of $\sigma$ (we view $\sigma$ as a representation of $KZ$). Let
\(\eta:\cO_L^\times\to\FF^\times\) be a smooth character. We extend $\eta$ to $\widetilde\eta\colon H\to\FF^{\times}$ by defining
\[
\widetilde\eta(u\varpi_F^m)=\eta(u)\quad(u\in\cO_L^\times,\ m\in \mathbb Z).
\]
If \(\eta|_{\cO_F^\times}\ne
\zeta|_{\cO_F^\times}\), then 
\[\Hom_{\cO_L^\times}(\ind^G_{KZ}\sigma, \eta)\subset\Hom_{\cO_F^\times}(\ind^G_{KZ}\sigma, \eta)=0.\]
By Frobenius reciprocity, we have
\[
\Hom_H(\ind_{H_k}^H\sigma_k,\widetilde\eta) \simeq\Hom_{H_k}(\sigma_k,\widetilde\eta).
\]
Define $ \ell_k\in\Hom_{\FF}(\sigma_k,\widetilde\eta)$ by
\[
 \ell_k(\vec e_{\vec j})=
 \begin{cases}
 \delta_{\vec j,\vec0},&k\ge1,\\
 \delta_{\vec j,\vec r},&k\le0,
 \end{cases}
\]
for $ 0\le\vec j\le \vec r$. 
\begin{lemma}\label{lem:local-hom}
Let
\(\eta:\cO_L^\times\to\FF^\times\) be a smooth character such that \(\eta|_{\cO_F^\times}=
\zeta|_{\cO_F^\times}\). We have $\Hom_{H_k}(\sigma_k,\widetilde\eta)=\FF\ell_k$ for each $k\in\mathbb Z$.
\end{lemma}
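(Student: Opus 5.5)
We need to show that $\Hom_{H_k}(\sigma_k,\widetilde\eta)$ is exactly the line spanned by $\ell_k$. This has two parts: every $H_k$-equivariant form is a multiple of $\ell_k$ (an upper bound on the Hom-space), and $\ell_k$ itself is $H_k$-equivariant (existence).

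By Lemma~\ref{lem:subgroups}, $H_k$ is generated by $Z$ and the one-parameter family of elements $1+\varpi_L\varpi_F^{m}b$ with $b\in\cO_F$, where $m=k-1$ if $k\ge1$ and $m=-k$ if $k\le0$. Lemma~\ref{lem:subgroups} also gives the action explicitly. The element $\varpi_F^n a$ acts by $\bar a^{R(\vec r)}=\zeta(a)$. The unipotent-type elements act through a lower unipotent matrix $\bigl(\begin{smallmatrix}1&0\\ \bar u\bar b&1\end{smallmatrix}\bigr)$ when $k\ge1$, and through an upper unipotent matrix $\bigl(\begin{smallmatrix}1&\bar b\\0&1\end{smallmatrix}\bigr)$ when $k\le0$. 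In both cases the reduction of $b$ runs over all of $k_F$.

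\emph{Upper bound.} The group $H_k/Z$ is pro-$p$, since it lies in $1+\varpi_L\cO_L$ up to the center. Hence $\widetilde\eta$ is trivial on the elements $1+\varpi_L\varpi_F^m b$. Moreover $\widetilde\eta$ agrees with $\zeta$ on $\cO_F^\times$ by hypothesis, and is trivial on $\varpi_F$ by definition, as is $\sigma_k$. So $\Hom_{H_k}(\sigma_k,\widetilde\eta)$ is the space of linear forms on $\sigma_k$ that are invariant under the group $U$ of lower unipotent matrices (for $k\ge1$) or upper unipotent matrices (for $k\le0$) of $\GL_2(k_F)$. Such forms are the same as $U$-coinvariants of $\sigma$, which is the dual of the $U$-invariants of $\sigma^\vee$. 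For the upper unipotent group acting on $\sigma_{\vec r}$, the invariants of $\sigma$ are the line $\FF\vec e_{\vec0}$, by the standard result on Serre weights (\cite{BL94}). Dually, the coinvariants are one-dimensional, spanned by the image of the vector killed last, namely $\vec e_{\vec r}$.

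Concretely, for $k\le0$ I would argue directly. Let $\ell$ be invariant. Expanding $x_i^{r_i-j_i}(\bar b^{p^{i-1}}x_i+y_i)^{j_i}$ and comparing coefficients of the polynomial in $\bar b$, the identity $\ell(g\vec e_{\vec j})=\ell(\vec e_{\vec j})$ for all $b$ forces $\ell(\vec e_{\vec j'})=0$ for every $\vec j'<\vec j$ appearing with nonzero multinomial coefficient. Taking $\vec j=\vec r$ and using $r_i\le p-1$, all the coefficients $\binom{r_i}{j'_i}$ are nonzero mod $p$, so $\ell$ vanishes on all $\vec e_{\vec j'}$ with $\vec j'\neq\vec r$. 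Hence $\ell\in\FF\ell_k$.

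Here lies the main subtlety. The exponents $p^{i-1}$ mean that distinct $\vec j'$ give monomials $\bar b^{R(\vec j)-R(\vec j')}$, which must be distinguished as functions on $k_F$. Their degrees are less than $q$, since $R(\vec r)\le q-1$. If $R(\vec j)-R(\vec j')=q-1$, then $\vec j=\vec r=(p-1,\dots,p-1)$ and $\vec j'=\vec0$, and the exponents $0$ and $q-1$ coincide as functions except at $b=0$. I would handle this by using the constant term (the case $b=0$), or by treating $\vec e_{\vec0}$ via $\vec j$ with one coordinate lowered. The case $k\ge1$ is symmetric, with the roles of $x$ and $y$ swapped and $\vec e_{\vec0}$ surviving; the factor $\bar u$ is harmless since $\bar u\bar b$ also ranges over $k_F$.

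\emph{Existence.} For $k\le0$, the upper unipotent matrices preserve the coefficient of $\vec e_{\vec r}$: the only term $y^{r}$ comes from $\vec e_{\vec r}$ itself. So $\ell_k$ is $U$-invariant, and together with the central-character compatibility $\ell_k$ is $H_k$-equivariant. The case $k\ge1$ is the same argument with $\vec e_{\vec0}$ in place of $\vec e_{\vec r}$.
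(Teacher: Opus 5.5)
Your proposal is correct and follows essentially the paper's route: you use Lemma~\ref{lem:subgroups} and the triviality of $\widetilde\eta$ on the pro-$p$ group $1+\varpi_L\cO_L$ to reduce $H_k$-equivariance to factoring through the coinvariants of the upper unipotent group (for $k\le0$) or the lower unipotent group (for $k\ge1$), and you then identify those coinvariants; the paper does this last step by citing \cite[Lemma 2]{BL94}, while you verify it directly by comparing coefficients in $\bar b$. The ``subtlety'' you raise does not actually occur: once the $\vec j'=\vec j$ term is cancelled, the remaining polynomial in $\bar b$ has no constant term and degree at most $q-1$, so its vanishing on all $q$ elements of $k_F$ already forces every coefficient to be zero.
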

\begin{proof}
Suppose first $k\le0$. Let $N$ be the upper unipotent group of $\GL_2(k_F)$. Recall that $\sigma_k$ and $\sigma$ have the same underlying space. For $f\in\Hom_{\FF}(\sigma_k,\widetilde\eta)$, we claim that $f\in\Hom_{H_k}(\sigma_k,\widetilde\eta)$ if and only if $f$ factors through the $N$-coinvariants of $\sigma$. Indeed, since \(\eta|_{\cO_F^\times}=
\zeta|_{\cO_F^\times}\), $f$ is \(Z\)-equivariant. Since $\FF^{\times}$ has no non-trivial elements of $p$-power order, \(\widetilde\eta\) is trivial on
\(1+\varpi_L\cO_L\). By \eqref{eq:H_k}, $f$ is $H_k$-equivariant if and only if $f(\sigma_k(1+\varpi_L\varpi_F^{-k}b)x)=f(x)$ for all $b\in\cO_F$ and $x\in V_\sigma$. By Lemma~\ref{lem:subgroups}, this is also equivalent to requiring that $f$ factors through the $N$-coinvariants of $\sigma$.
By \cite[Lemma 2]{BL94} and its proof, we have
\[
(\sigma)_N=\sigma/\langle\vec e_{\vec j}: \vec j\neq\vec r \rangle.
\]
 So $\Hom_{H_k}(\sigma_k,\widetilde\eta)$ is one-dimensional and is generated by $\ell_k$.

When $k\ge1$, let $\overline{N}$ be the lower unipotent group of $\GL_2(k_F)$. Similarly, for $f\in\Hom_{\FF}(\sigma_k,\widetilde\eta)$, we have  $f\in\Hom_{H_k}(\sigma_k,\widetilde\eta)$ if and only if $f$ factors through the $\overline{N}$-coinvariants of $\sigma$. By \cite[Lemma 2]{BL94} and its proof we also have
\[
(\sigma)_{\overline N}=\sigma/\langle\vec e_{\vec j}: \vec j\neq\vec 0 \rangle.
\]
 So $\Hom_{H_k}(\sigma_k,\widetilde\eta)$ is one-dimensional and is generated by $\ell_k$.
\end{proof}
When \(\eta|_{\cO_F^\times}=
\zeta|_{\cO_F^\times}\), we also extend $\ell_k$ to a functional in $\Hom_H(\ind_{H_k}^H\sigma_k,\widetilde\eta)$ using \(\ell_k([h,v])=\widetilde\eta(h)\ell_k(v)\).
\begin{lemma}\label{lem:dual-hecke}
Suppose  \(\eta|_{\cO_F^\times}=
\zeta|_{\cO_F^\times}\). Under the identification
\[
 \Hom_H(\ind^G_{KZ}\sigma,\widetilde\eta)
 \simeq\prod_{k\in\mathbb Z}\Hom_H(\ind_{H_k}^H\sigma_k,\widetilde\eta) 
 \simeq\prod_{k\in\mathbb Z}\FF\ell_k,
\]
precomposition by \(\Phi\) sends \((a_k\ell_k)\) to \((b_k\ell_k)\), where
\begin{equation}\label{eq:dual-hecke}
 b_k=
 \begin{cases}
 a_{k+1},&k\le-1,\\
 \delta_{\vec0,\vec r}a_1,&k=0,\\
 \delta_{\vec0,\vec r}a_0,&k=1,\\
 a_{k-1},&k\ge2.
 \end{cases}
\end{equation}
\end{lemma}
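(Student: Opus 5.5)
Under the identification of Lemma~\ref{lem:dual-hecke}, a form $\Lambda=\sum_k a_k\ell_k$ is determined by its values on the generators $[s_k^{-1},v]$ of the summands $\ind_{H_k}^H\sigma_k$, namely $\Lambda([s_k^{-1},v])=a_k\ell_k(v)$. By Frobenius reciprocity, $b_k$ is therefore the value $(\Lambda\circ\Phi)([s_k^{-1},\vec e_{\vec j}])$ read off on the basis vector that $\ell_k$ detects. So the plan is to evaluate $\Lambda\circ\Phi$ on $[s_k^{-1},\vec e_{\vec0}]$ when $k\ge1$, and on $[s_k^{-1},\vec e_{\vec r}]$ when $k\le0$. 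For this we use $\Phi([s_k^{-1},v])=\Phi_k^{k+1}+\Phi_k^{k-1}$ from Lemma~\ref{lem:local-hecke}, together with the rule $\ell_l([h,v])=\widetilde\eta(h)\ell_l(v)$.

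For $k\le0$, take $\vec j=\vec r$. The term $\Phi_k^{k+1}$ gives $[1,\vec e_{\vec r}]$ in the $(k+1)$-summand. If $k+1\le0$, then $\ell_{k+1}(\vec e_{\vec r})=1$ and this term contributes $a_{k+1}$. If $k=0$, the target is $\ell_1$, which evaluates to $\ell_1(\vec e_{\vec r})=\delta_{\vec r,\vec0}$, so the contribution is $\delta_{\vec0,\vec r}a_1$. The term $\Phi_k^{k-1}$ gives $\sum_{t\in k_F}\widetilde\eta(1-\varpi_L\varpi_F^{-k}[t])\,t^{R(\vec r)}\,a_{k-1}\ell_{k-1}(\vec e_{\vec0})$, where the target index $k-1\le-1$ uses $\ell_{k-1}=\delta_{\cdot,\vec r}$. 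As in the proof of Lemma~\ref{lem:local-hom}, $\widetilde\eta$ is trivial on $1+\varpi_L\cO_L$. Since $\ell_{k-1}(\vec e_{\vec0})=\delta_{\vec0,\vec r}$, this term becomes $\delta_{\vec0,\vec r}a_{k-1}\sum_{t\in k_F}t^{0}$, and the sum equals $q\cdot 1=0$ in characteristic $p$ under the convention $0^0=1$. Thus $b_k=a_{k+1}$ for $k\le-1$ and $b_0=\delta_{\vec0,\vec r}a_1$.

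For $k\ge1$, take $\vec j=\vec0$. The term $\Phi_k^{k-1}$ gives $[1,\vec e_{\vec0}]$ in the $(k-1)$-summand. If $k\ge2$, it contributes $a_{k-1}\ell_{k-1}(\vec e_{\vec0})=a_{k-1}$. If $k=1$, the target is $\ell_0$, which evaluates to $\ell_0(\vec e_{\vec0})=\delta_{\vec0,\vec r}$, giving $\delta_{\vec0,\vec r}a_0$. The term $\Phi_k^{k+1}$ gives $\sum_t(\bar u t)^{R(\vec r)}a_{k+1}\ell_{k+1}(\vec e_{\vec r})$, again using that $\widetilde\eta$ is trivial on $1+\varpi_L\cO_L$. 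Here $\ell_{k+1}(\vec e_{\vec r})=\delta_{\vec r,\vec0}$. When $\vec r=\vec0$ the sum is $\sum_t 1=0$. Hence this term always vanishes, and we get $b_k=a_{k-1}$ for $k\ge2$ and $b_1=\delta_{\vec0,\vec r}a_0$.

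The main point requiring care is the bookkeeping of the pullback: $\Lambda\circ\Phi$ restricted to the $k$-th summand receives contributions from $a_{k\pm1}$ through $\Phi_k^{k\pm1}$. One must also check that evaluating at the single vector detected by $\ell_k$ legitimately determines $b_k$. This holds because $\Lambda\circ\Phi$ restricted to the $k$-th summand lies in $\Hom_{H_k}(\sigma_k,\widetilde\eta)=\FF\ell_k$ by Lemma~\ref{lem:local-hom}, and $\ell_k$ takes the value $1$ on that vector. The only potentially subtle cancellations are the sums $\sum_{t\in k_F}1=q=0$, which occur precisely in the $\vec r=\vec0$ cross terms, and these vanish in characteristic $p$.
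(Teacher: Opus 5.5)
Your proof is correct and follows essentially the same route as the paper. You split $\Lambda\circ\Phi$ on the $k$-th summand via $\Phi_k=\Phi_k^{k+1}+\Phi_k^{k-1}$, use that $\widetilde\eta$ is trivial on $1+\varpi_L\cO_L$, and kill the cross terms with $\sum_{t\in k_F}1=q=0$. The only cosmetic difference is that you evaluate at the single vector detected by $\ell_k$ and appeal to the one-dimensionality in Lemma~\ref{lem:local-hom}, whereas the paper computes the pulled-back functionals $(\Phi_k^{k\pm1})^*\ell_{k\pm1}$ on all basis vectors $\vec e_{\vec j}$.
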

\begin{proof}
For $m\ge0$, since \(1-\varpi_L\varpi_F^m[t]\in1+\varpi_L\cO_L\), we have
\(\widetilde\eta(1-\varpi_L\varpi_F^m[t])=1\).
For \(k\le0\), Lemmas~\ref{lem:local-hecke} and
\ref{lem:local-hom} give
\[
 ((\Phi_k^{k-1})^*\ell_{k-1})([1,\vec e_{\vec j}])
 =\delta_{\vec0,\vec r}\sum_{t\in k_F}t^{R(\vec j)}.
\]
If \(\vec r\ne\vec0\), $\delta_{\vec0,\vec r}=0$; if
\(\vec r=\vec0\), \(\sum_{t\in k_F}t^{R(\vec j)}=0\). Therefore $(\Phi_k^{k-1})^*\ell_{k-1}=0$. 
Also \((\Phi_k^{k+1})^*\ell_{k+1}=\ell_k\) for \(k\le-1\), whereas
\((\Phi_0^1)^*\ell_1=\delta_{\vec0,\vec r}\ell_0\).

Similarly, for \(k\ge1\),
\[
 ((\Phi_k^{k+1})^*\ell_{k+1})([1,\vec e_{\vec j}])
 =\delta_{\vec0,\vec r}
   \sum_{t\in k_F}(\bar u t)^{R(\vec r)-R(\vec j)}=0.
\]
For \(k\ge2\), \((\Phi_k^{k-1})^*\ell_{k-1}=\ell_k\), while
\((\Phi_1^0)^*\ell_0=\delta_{\vec0,\vec r}\ell_1\).
Combining these identities proves \eqref{eq:dual-hecke}.
\end{proof}
\begin{theorem}\label{thm:restriction}
Let \(\sigma\) be a Serre weight, let \(\lambda\in\FF\), and let
\(\eta:\cO_L^\times\to\FF^\times\) be a smooth character. Then
\[
 \dim_{\FF}\Hom_{\cO_L^\times}
 \left(\left(\ind^G_{KZ}\sigma/(\Phi-\lambda)\right)|_{\cO_L^\times},\eta\right)
 =
 \begin{cases}
 1,& \text{ if }\dim\sigma=1,\ \lambda^2=1,\ \eta=\sigma|_{\cO_L^\times},\\
 0,&\text{otherwise}.
 \end{cases}
\]
\end{theorem}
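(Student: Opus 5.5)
First we reduce to \(\sigma=\sigma_{\vec r}\). Every Serre weight is \(\sigma_{\vec r}\otimes\det^m\), and \(\ind^G_{KZ}(\sigma_{\vec r}\otimes\det^m)\simeq(\ind^G_{KZ}\sigma_{\vec r})\otimes(\chi\circ\det)\), where \(\chi\) is a character of \(F^\times\) extending \(\bar a\mapsto \bar a^m\) on \(\cO_F^\times\) with \(\chi(\varpi_F)=1\). This isomorphism transports the Hecke operator \(\Phi\) to \(\chi(\varpi_F)^{\pm1}\Phi=\Phi\), possibly up to a fixed nonzero scalar, and such a scalar does not change the condition \(\lambda^2=1\). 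Twisting \(\eta\) by \((\chi\circ\det)|_{\cO_L^\times}\) then reduces the statement to \(m=0\), and the conditions \(\dim\sigma=1\) and \(\eta=\sigma|_{\cO_L^\times}\) are preserved. If \(\eta|_{\cO_F^\times}\neq\zeta|_{\cO_F^\times}\), both sides vanish: the Hom space is zero, as noted before Lemma~\ref{lem:local-hom}, and \(\eta\neq\sigma|_{\cO_L^\times}\). So we assume \(\eta|_{\cO_F^\times}=\zeta|_{\cO_F^\times}\).

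Next, since \(\varpi_F I_2\) acts trivially on \(\ind^G_{KZ}\sigma\), we have
\[
\Hom_{\cO_L^\times}(\pi,\eta)=\Hom_H(\pi,\widetilde\eta)
\]
for \(\pi=\ind^G_{KZ}\sigma/(\Phi-\lambda)\). By right exactness, this space is the kernel of \(\Phi^*-\lambda\) acting on \(\Hom_H(\ind^G_{KZ}\sigma,\widetilde\eta)\). By Lemmas~\ref{lem:local-hom} and~\ref{lem:dual-hecke}, this space is \(\prod_k\FF\ell_k\), and \(\Phi^*\) is given by \eqref{eq:dual-hecke}. So we must count sequences \((a_k)_{k\in\mathbb Z}\) with \(b_k=\lambda a_k\) for all \(k\).

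If \(\vec r\neq\vec0\), the equations read \(a_{k+1}=\lambda a_k\) for \(k\le-1\), \(0=\lambda a_0\), \(0=\lambda a_1\), and \(a_{k-1}=\lambda a_k\) for \(k\ge2\). The \(k=-1\) equation gives \(a_0=\lambda a_{-1}\), and by induction \(a_{-n}=\lambda a_{-n-1}\) for \(n\ge0\). Similarly, \(a_1=\lambda a_2\) and \(a_{n}=\lambda a_{n+1}\) for \(n\ge1\).

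If \(\lambda\neq0\), the conditions \(a_0=a_1=0\) propagate outward and force \(a_k=0\) for all \(k\). If \(\lambda=0\), the equations say \(a_{k+1}=0\) for \(k\le-1\), so \(a_k=0\) for \(k\le0\), and \(a_{k-1}=0\) for \(k\ge2\), so \(a_k=0\) for \(k\ge1\). Either way the space is zero.

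If \(\vec r=\vec 0\), then \(\dim\sigma=1\) and the equations become \(a_{k\pm1}=\lambda a_k\) on both sides, together with \(a_1=\lambda a_0\) and \(a_0=\lambda a_1\).

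If \(\lambda=0\), everything vanishes as before. Otherwise \(a_1=\lambda a_0=\lambda^2a_1\), so a nonzero solution forces \(\lambda^2=1\). Conversely, when \(\lambda^2=1\) the sequence is determined by \(a_0\): \(a_1=\lambda a_0\), and the remaining \(a_k\) are determined recursively via \(a_k=\lambda^{-1}a_{k\mp1}\) going outward. This gives dimension exactly one.

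The remaining check is that the characters match. When \(\vec r=\vec0\), Lemma~\ref{lem:local-hom} requires \(\widetilde\eta\) trivial on \(1+\varpi_L\cO_L\), which is automatic, and \(\eta|_{\cO_F^\times}=\zeta\) is trivial. Since \(\cO_L^\times=\cO_F^\times(1+\varpi_L\cO_L)\) (ramified, so the residue fields agree), this says exactly that \(\eta\) is trivial, i.e. \(\eta=\sigma|_{\cO_L^\times}\).

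The main obstacle is the bookkeeping in the twist reduction: we must confirm that twisting by \(\det^m\) rescales \(\Phi\) only by a scalar whose square is \(1\), and keep the identification \(\eta=\sigma|_{\cO_L^\times}\) consistent through the twist. Choosing \(\chi(\varpi_F)=1\) should make the scalar \(1\).
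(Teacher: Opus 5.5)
Your proposal is correct and follows the paper's proof essentially step for step. The steps are the same: reduce to \(\sigma_{\vec r}\) by a twist, dispose of the case \(\eta|_{\cO_F^\times}\neq\zeta|_{\cO_F^\times}\), pass to \(\Hom_H(-,\widetilde\eta)\) and the kernel of \(\Phi^*-\lambda\) on \(\prod_k\FF\ell_k\) via Lemma~\ref{lem:dual-hecke}, run the same case analysis on \(\vec r\) and \(\lambda\), and identify \(\eta\) as trivial using \(\cO_L^\times=\cO_F^\times(1+\varpi_L\cO_L)\).

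Your twist bookkeeping is more explicit than the paper's. One phrase should be tightened: a general nonzero scalar \(c\) would turn \(\lambda^2=1\) into \((\lambda/c)^2=1\). Here, however, the transported operator is exactly \(\chi(\varpi_F)^{-1}\Phi\), which equals \(\Phi\) for your choice \(\chi(\varpi_F)=1\), so the reduction goes through.
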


\begin{proof}
Since twisting by a character commutes with $\ind_{KZ}^G$ and induces an isomorphism between the Hecke algebras, it suffices to prove the theorem for
\(\sigma=\sigma_{\vec r}\). If \(\eta|_{\cO_F^\times}\ne
\zeta|_{\cO_F^\times}\), then $\Hom_{\cO_L^\times}
 (\ind^G_{KZ}\sigma/(\Phi-\lambda),\eta)=0$. So we assume \(\eta|_{\cO_F^\times}=\zeta|_{\cO_F^\times}\).
We have
\[
\Hom_{\cO_L^\times}
 (\ind^G_{KZ}\sigma/(\Phi-\lambda),\eta)=\Hom_{H}
 (\ind^G_{KZ}\sigma/(\Phi-\lambda),\widetilde\eta).
\]
Applying $\Hom_{H}
 (-,\widetilde\eta)$ to the right-exact presentation
\[
 \ind^G_{KZ}\sigma\xrightarrow{\Phi-\lambda}\ind^G_{KZ}\sigma
 \longrightarrow \ind^G_{KZ}\sigma/(\Phi-\lambda)\longrightarrow0,
\]
we obtain a left-exact sequence
\[
0\to\Hom_{H}
 (\ind^G_{KZ}\sigma/(\Phi-\lambda),\widetilde\eta)\to \Hom_{H}
 (\ind^G_{KZ}\sigma,\widetilde\eta)\xrightarrow{\Phi^*-\lambda}\Hom_{H}
 (\ind^G_{KZ}\sigma,\widetilde\eta).
\]
Thus, in the coordinates of Lemma~\ref{lem:dual-hecke}, the elements of $\ker(\Phi^*-\lambda)$ 
are precisely the sequences $(a_k)$ satisfying
\begin{equation}\label{eq:a_k}
\begin{aligned}
 a_{k+1}&=\lambda a_k\quad(k\le-1),\\
 \delta_{\vec0,\vec r}a_1&=\lambda a_0,\\
 \delta_{\vec0,\vec r}a_0&=\lambda a_1,\\
 a_{k-1}&=\lambda a_k \quad (k\ge2).
\end{aligned}
\end{equation}
When \(\lambda=0\), the first relation of \eqref{eq:a_k} kills all \(a_k\) for
\(k\le0\), and the last kills all \(a_k\) for \(k\ge1\).
When \(\lambda\ne0\) and \(\vec r\ne\vec0\), the middle
relations of \eqref{eq:a_k} give \(a_0=a_1=0\), and the first and last relations give
\(a_k=0\) for every \(k\).

Suppose that \(\vec r=\vec0\) and \(\lambda\ne0\).  Then the central character $\zeta$ of $\sigma$ is trivial. Since
\(\cO_L^\times=\cO_F^\times(1+\varpi_L\cO_L)\), the assumption \(\eta|_{\cO_F^\times}=\zeta|_{\cO_F^\times}\) implies
 \(\eta=1\). 
The middle relations of \eqref{eq:a_k} have a solution $(a_0,a_1)\neq0$
if and only if \(\lambda^2=1\). So if \(\lambda^2\neq1\), then $a_0=a_1=0$; hence the first and last relations of \eqref{eq:a_k} imply
\(a_k=0\) for every \(k\).
When \(\lambda^2=1\), \eqref{eq:a_k} is equivalent to 
\[
 a_k=\lambda^k a_0\qquad(k\in\mathbb Z),
\]
and the solution space has dimension one. 

Thus $\Hom_{\cO_L^\times}
 (\ind^G_{KZ}\sigma/(\Phi-\lambda),\eta)$ is nonzero precisely when \(\vec r=\vec0\),
\(\lambda^2=1\), and \(\eta=1\); in this case it is one-dimensional.
This proves the theorem.\end{proof}

\begin{corollary}\label{cor:unit-vanishing}
Let \(\pi\) be an infinite-dimensional irreducible smooth
representation of \(G\) with a central character, and let
\(\eta:\cO_L^\times\to\FF^\times\) be a smooth character. Then
\[
 \Hom_{\cO_L^\times}(\pi|_{\cO_L^\times},\eta)=0.
\]
\end{corollary}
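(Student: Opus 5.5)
Fix a nonzero \(\phi\in\Hom_{\cO_L^\times}(\pi|_{\cO_L^\times},\eta)\); we aim for a contradiction by realizing \(\pi\) as a quotient of some \(\ind^G_{KZ}\sigma/(\Phi-\lambda)\) and then invoking Theorem~\ref{thm:restriction}.

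First, twisting by a character of \(G\) of the form \(\chi\circ\det\) does not change infinite-dimensionality or irreducibility. It changes \(\eta\) only by the character \(\chi\circ\det|_{\cO_L^\times}\). So we may twist so that \(\varpi_F I_2\) acts trivially on \(\pi\); this is possible because \(\pi\) has a central character and \(\FF\) is algebraically closed, so we may choose \(\chi(\varpi_F)\) to be a square root of the inverse of the scalar by which \(\varpi_F I_2\) acts.

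Next, \(\pi\) is smooth, so \(\pi^{K_1}\neq0\) for the pro-\(p\) group \(K_1=\ker(K\to\GL_2(k_F))\). Hence \(\pi|_K\) contains a Serre weight \(\sigma\). Frobenius reciprocity then gives a nonzero, hence surjective, map \(\ind^G_{KZ}\sigma\to\pi\).

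We now need this map to factor through \(\ind^G_{KZ}\sigma/(\Phi-\lambda)\) for some \(\lambda\in\FF\). The standard argument of Barthel--Livn\'e \cite{BL94} applies without any admissibility assumption. The space \(\Hom_G(\ind^G_{KZ}\sigma,\pi)\simeq\Hom_K(\sigma,\pi)\) is a nonzero \(\FF[\Phi]\)-module. We choose \(\sigma\) and a vector so that the relevant \(\Hom_K(\sigma,\pi^{I_1})\) is finite-dimensional, e.g. by working inside the finite-dimensional \(\FF[\Phi]\)-stable subspace generated by a single \(I_1\)-invariant vector, as in the proof of \cite[Theorem~33]{BL94}. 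An eigenvector of \(\Phi\) then exists because \(\FF\) is algebraically closed. This step, securing the Hecke eigenvector and hence the surjection
\[
\ind^G_{KZ}\sigma/(\Phi-\lambda)\twoheadrightarrow\pi,
\]
is the main obstacle. I would rely on \cite{BL94}, which proves that every irreducible smooth representation with a central character is such a quotient; this is the step that goes beyond the finite-dimensionality of \(\sigma\).

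Composing the surjection with \(\phi\) yields a nonzero element of
\[
\Hom_{\cO_L^\times}\bigl(\ind^G_{KZ}\sigma/(\Phi-\lambda),\eta\bigr).
\]
By Theorem~\ref{thm:restriction}, this forces \(\dim\sigma=1\), \(\lambda^2=1\), and \(\eta=\sigma|_{\cO_L^\times}\). Twisting further, we may assume \(\sigma\) is trivial, so \(\pi\) is a quotient of \(\ind^G_{KZ}1/(\Phi-\lambda)\) with \(\lambda=\pm1\). By \cite{BL94}, this module has a one-dimensional quotient \(\mu_\lambda\circ\det\) with kernel the special (Steinberg-type) representation, which is irreducible. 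Hence the infinite-dimensional irreducible quotient \(\pi\) must be that Steinberg twist. To finish, we use that the nonzero form in Theorem~\ref{thm:restriction} is unique up to scalar and that the one-dimensional quotient supplies one such form. Therefore \(\phi\) composed with the surjection equals, up to scalar, the form factoring through \(\mu_\lambda\circ\det\). This form kills the kernel, which surjects onto \(\pi\), so \(\phi=0\), a contradiction. The remaining care is to check that \(\pi\) is indeed a quotient of the kernel, i.e. that \(\ind/(\Phi-\lambda)\) has exactly these two composition factors, so the map to \(\pi\) is nonzero on the Steinberg part.
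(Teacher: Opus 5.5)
Your proof is correct. It reduces to a quotient of $\ind^G_{KZ}\sigma/(\Phi-\lambda)$ exactly as the paper does, but it handles the exceptional case by a different route.

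\textbf{How the paper handles it.} The paper excludes the exceptional case before applying Theorem~\ref{thm:restriction}. By \cite[Theorem~33(3)]{BL94}, when $\sigma$ is a character and $\lambda^2=1$, every irreducible quotient of $\ind^G_{KZ}\sigma/(\Phi-\lambda)$ is one-dimensional. So an infinite-dimensional $\pi$ comes with non-exceptional parameters, and only the vanishing half of the theorem is needed.

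\textbf{How you handle it.} You let the theorem force the exceptional case and then use its multiplicity-one half. Write $M=\ind^G_{KZ}1/(\Phi-\lambda)$ and $q\colon M\twoheadrightarrow\pi$. Let $\mu$ be unramified with $\mu(\varpi_F)=\lambda$, and set $S=\ker(M\to\mu\circ\det)$. Then $\phi\circ q$ is a multiple of the form vanishing on the $G$-stable subspace $S$. You should state explicitly that $\mu\circ\det$ is trivial on $\cO_L^\times$, so this form really lies in $\Hom_{\cO_L^\times}(M,\eta)$.

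\textbf{What your route actually needs.} It needs less from \cite{BL94} than you suggest.
\begin{itemize}
\item The character quotient is elementary: it is $[g,1]\mapsto\mu(\det g)$, i.e.\ the solution $a_k=\lambda^k a_0$ of \eqref{eq:a_k}.
\item Your ``remaining care'' is unnecessary. If $q(S)=0$, then $\pi$ would be a quotient of the one-dimensional $M/S$. Hence $q(S)\neq 0$, so $q(S)=\pi$ by irreducibility, and $\phi=0$.
\item Neither the irreducibility of $S$ nor the length of $M$ is used.
\end{itemize}
So your route trades the paper's structural input for the exact dimension count in Theorem~\ref{thm:restriction}. That input is that the special representation is a subrepresentation of a non-split extension, hence never a quotient. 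In particular, your sentence ``$\pi$ must be that Steinberg twist'' describes an empty situation.

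\textbf{On the Hecke eigenvector.} Your sketch via finite-dimensional $\Phi$-stable spaces of $I_1$-invariants is not a valid argument without admissibility. Since you fall back on \cite[Theorem~33(1)]{BL94}, though, this step is exactly what the paper uses.
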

\begin{proof}
Since \(\pi\) has a central character, by \cite[Theorem 33(1)]{BL94}, there exist a Serre weight $\sigma=\sigma_{\vec r}$, a smooth character $\chi \colon F^{\times}\to\FF^{\times}$, and $\lambda\in \FF$ such that $\pi$ is a quotient of 
\[
\chi\circ\det\otimes\ind^G_{KZ}\sigma/(\Phi-\lambda).
\] 
If \(\vec r=\vec0\) and \(\lambda^2=1\), every irreducible quotient of
this representation is one-dimensional by \cite[Theorem 33(3)]{BL94}.
Since \(\pi\) is infinite-dimensional, we therefore have
\(\vec r\ne\vec0\) or \(\lambda^2\ne1\).
Precomposition with the quotient map $\chi\circ\det\otimes\ind^G_{KZ}\sigma/(\Phi-\lambda)\twoheadrightarrow \pi$ gives an injection
\[
\begin{aligned}
 \Hom_{\cO_L^\times}(\pi|_{\cO_L^\times},\eta)
 &\hookrightarrow
 \Hom_{\cO_L^\times}\!\left(
 (\chi\circ\det)\otimes
 \left(\ind^G_{KZ}\sigma/(\Phi-\lambda)\right),\eta\right)\\
 &\simeq
 \Hom_{\cO_L^\times}\!\left(
 \ind^G_{KZ}\sigma/(\Phi-\lambda),
 \eta\cdot(\chi^{-1}\circ N_{L/F})\right),
\end{aligned}
\]
where \(N_{L/F}\) denotes the norm map. The last Hom space vanishes by Theorem~\ref{thm:restriction}, proving the claim.
\end{proof}

\begin{corollary}\label{cor:torus-vanishing}
Let \(\pi\) be an infinite-dimensional irreducible smooth
representation of \(G\) with a central character, and let \(\eta:L^\times\to\FF^\times\)
be a smooth character. Then
\[
 \Hom_{L^\times}(\pi|_{L^\times},\eta)=0.
\]
\end{corollary}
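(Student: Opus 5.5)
This follows directly from Corollary~\ref{cor:unit-vanishing}, because $\cO_L^\times\subset L^\times$. Take any $\varphi\in\Hom_{L^\times}(\pi|_{L^\times},\eta)$. Since $\varphi$ intertwines the action of all of $L^\times$, it certainly intertwines the action of the subgroup $\cO_L^\times$. So $\varphi$ lies in $\Hom_{\cO_L^\times}(\pi|_{\cO_L^\times},\eta|_{\cO_L^\times})$.

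The plan is to restrict the character and apply the earlier corollary. First check that $\eta|_{\cO_L^\times}$ is a smooth character $\cO_L^\times\to\FF^\times$. This holds because $\eta$ is smooth on $L^\times$ and $\cO_L^\times$ is an open subgroup, so the kernel of the restriction is still open. Then apply Corollary~\ref{cor:unit-vanishing} to $\pi$ and this character; the hypotheses on $\pi$ are exactly the ones assumed there. It gives
\[
\Hom_{\cO_L^\times}(\pi|_{\cO_L^\times},\eta|_{\cO_L^\times})=0 .
\]
Hence $\varphi=0$.

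Put differently, we have a chain of inclusions of Hom spaces,
\[
\Hom_{L^\times}(\pi|_{L^\times},\eta)\subseteq \Hom_{\cO_L^\times}(\pi|_{\cO_L^\times},\eta|_{\cO_L^\times})=0,
\]
and no further input is needed. In particular we never use the index-two structure $[L^\times:H]=2$, and we never treat $\varpi_L$ separately.

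There is essentially no obstacle. The only point to check is that the restricted character is smooth, which is immediate.
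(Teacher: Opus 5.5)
Your proposal is correct and is essentially the paper's own proof: the paper likewise observes that $\Hom_{L^\times}(\pi|_{L^\times},\eta)$ injects into $\Hom_{\cO_L^\times}(\pi|_{\cO_L^\times},\eta|_{\cO_L^\times})$, and that this target vanishes by Corollary~\ref{cor:unit-vanishing}. Your extra check that $\eta|_{\cO_L^\times}$ is smooth is a harmless detail the paper leaves implicit.
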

\begin{proof}
We have an injection
\[
 \Hom_{L^\times}(\pi|_{L^\times},\eta)
 \hookrightarrow
 \Hom_{\cO_L^\times}(\pi|_{\cO_L^\times},\eta|_{\cO_L^\times}),
\]
where the target vanishes by Corollary~\ref{cor:unit-vanishing}.
\end{proof}

\begin{corollary}\label{cor:open-subgroup}
Let \(\pi\) be an infinite-dimensional irreducible smooth
representation of \(G\) with a central character. Let \(P\) be an open pro-\(p\) subgroup
of \(\cO_L^\times\). Then
\[
 \Hom_{P}(\pi|_{P},\FF)=0.
\]
\end{corollary}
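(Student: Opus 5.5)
The plan is to reduce to Corollary~\ref{cor:unit-vanishing} by inducing from \(P\) up to \(\cO_L^\times\). Suppose \(\ell\in\Hom_P(\pi|_P,\FF)\) is nonzero. Since \(P\) is open in the compact group \(\cO_L^\times\), the quotient \(\cO_L^\times/P\) is finite. Moreover \(\cO_L^\times\) is abelian, so \(P\) is normal in it. The \(P\)-invariant linear forms on \(\pi\) form the space
\[
W:=\Hom_P(\pi|_P,\FF),
\]
and the finite abelian group \(A:=\cO_L^\times/P\) acts on \(W\) by \((a\cdot\ell)(v)=\ell(a^{-1}v)\). This action is well defined because \(P\) is normal, indeed central.

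The key step is to produce an \(A\)-eigenvector in \(W\). The \(\FF\)-span of the \(A\)-orbit of \(\ell\) is a nonzero finite-dimensional representation \(W_0\) of the finite abelian group \(A\) over the algebraically closed field \(\FF\). Any commuting family of endomorphisms of a nonzero finite-dimensional vector space over an algebraically closed field has a common eigenvector. This holds regardless of whether \(p\) divides \(|A|\), since we only need a simultaneous eigenvector and not semisimplicity. So there is a nonzero \(\ell_0\in W_0\) and a character \(\chi\colon A\to\FF^\times\) with \(a\cdot\ell_0=\chi(a)\ell_0\) for all \(a\in A\).

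Set \(\eta(a):=\chi(a)^{-1}\), viewed as a character of \(\cO_L^\times\) that is trivial on \(P\). Then \(\ell_0(gv)=\eta(g)\ell_0(v)\) for all \(g\in\cO_L^\times\), so \(\ell_0\) is a nonzero element of \(\Hom_{\cO_L^\times}(\pi|_{\cO_L^\times},\eta)\). The character \(\eta\) is smooth because its kernel contains the open subgroup \(P\). This contradicts Corollary~\ref{cor:unit-vanishing}, so \(W=0\).

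The only point needing care is the eigenvector step: one must not try to decompose \(W\) itself, which may be infinite-dimensional, and should instead work inside the finite-dimensional orbit span \(W_0\). Beyond that, the argument is routine bookkeeping of the sign convention relating \(\chi\) and \(\eta\).
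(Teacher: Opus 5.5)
Your argument is correct and is essentially the paper's proof in dual form. The paper uses Frobenius reciprocity to rewrite \(\Hom_P(\pi|_P,\FF)\) as \(\Hom_{\cO_L^\times}(\pi|_{\cO_L^\times},\Ind_P^{\cO_L^\times}\FF)\), then filters the finite-dimensional induced representation by smooth characters and applies left exactness; you instead find a simultaneous eigenvector directly in the finite-dimensional \(\cO_L^\times/P\)-stable span of a \(P\)-invariant form, and both routes rest on the same fact about the finite abelian group \(\cO_L^\times/P\) over the algebraically closed field \(\FF\) before concluding by Corollary~\ref{cor:unit-vanishing}.
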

\begin{proof}
Frobenius reciprocity gives
\begin{equation}\label{eq:Ind}
 \Hom_{P}(\pi|_{P},\FF)
 \simeq
 \Hom_{\cO_L^\times}
 \left(\pi|_{\cO_L^\times},\Ind_{P}^{\cO_L^\times}\FF\right).
\end{equation}
The induced representation $\Ind_{P}^{\cO_L^\times}\FF$ is finite-dimensional because \(P\) has
finite index in the compact group \(\cO_L^\times\).
As \(\cO_L^\times\) is abelian and \(\FF\) is algebraically closed, the
finite-dimensional representation \(\Ind_{P}^{\cO_L^\times}\FF\)
has a finite filtration with smooth characters as successive quotients.
Corollary~\ref{cor:unit-vanishing} and left exactness of
\(\Hom_{\cO_L^\times}(\pi,-)\) show, by induction on the filtration,
that the right-hand side of \eqref{eq:Ind} is zero.
\end{proof}

\section{Application to multilinear forms}
\label{sec:multilinear}
Let \(P\) be a pro-$p$ group. Let $I$ be the augmentation ideal of $\FF[\![P]\!]$. For an $\FF[\![P]\!]$-module $V$, write $V[I^n] = \{v\in V~|~ I^n v = 0\}$.
\begin{lemma}\label{smooth}
Suppose $V$ is a smooth $\FF$-representation of $P$. Then $V$ is an $\FF[\![P]\!]$-module and $V=\bigcup_{n\ge1}V[I^n]$.
\end{lemma}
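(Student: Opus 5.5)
The plan is to first make \(V\) into an \(\FF[\![P]\!]\)-module, and then prove that the augmentation ideal acts locally nilpotently by reducing to finite \(p\)-groups.

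\textbf{Module structure.} Recall that \(\FF[\![P]\!]=\varprojlim_N \FF[P/N]\), where \(N\) runs over the open normal subgroups of \(P\). Since \(V\) is smooth, every \(v\in V\) is fixed by some open normal subgroup \(N_v\), so \(v\) lies in \(V^{N_v}\), which is a module over the finite group algebra \(\FF[P/N_v]\). We define the action of \(\FF[\![P]\!]\) on \(v\) through the projection \(\FF[\![P]\!]\to\FF[P/N_v]\). This does not depend on the choice of \(N_v\): if \(N'\subseteq N_v\), the projections are compatible and the action of \(\FF[P/N']\) on \(V^{N_v}\) factors through \(\FF[P/N_v]\). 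Since any two open normal subgroups contain a common one, the action is additive in \(v\), and the module axioms follow because they hold in each finite quotient.

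\textbf{Local nilpotence.} Fix \(v\) and \(N=N_v\). The image of \(I\) in \(\FF[P/N]\) is the augmentation ideal \(\bar I\) of the finite \(p\)-group algebra. By the classical fact that the augmentation ideal of \(\FF[Q]\) for a finite \(p\)-group \(Q\) is nilpotent, there is \(n\) with \(\bar I^{\,n}=0\). To see this, recall that \(\FF[Q]\) is local with maximal ideal \(\bar I\), because the only simple \(\FF[Q]\)-module in characteristic \(p\) is the trivial one. Hence \(\bar I\) equals the Jacobson radical, which is nilpotent since the algebra is finite-dimensional. Alternatively, one can induct on a central series and use \((g-1)^{p^m}=g^{p^m}-1\).

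\textbf{Conclusion.} Since \(I^n\) maps into \(\bar I^{\,n}=0\), and since \(I^n v\) is computed through this image, we get \(I^n v=0\), that is, \(v\in V[I^n]\). Therefore \(V=\bigcup_n V[I^n]\).

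The only point requiring care is that the action of \(I^n\) on \(v\) really factors through \(\FF[P/N]\). The abstract ideal \(I^n\) consists of finite sums of products of elements of \(I\), so its image in \(\FF[P/N]\) is contained in \(\bar I^{\,n}\), and this settles it. No closure issues arise, because we only use the algebraic power \(I^n\).
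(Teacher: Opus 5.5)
Your proposal is correct and follows essentially the paper's route: you pass to a finite \(p\)-group quotient \(P/N\) through which the action factors (the paper does this on a finite-dimensional subrepresentation, you do it on \(V^{N_v}\) for a single vector), and then use that the augmentation ideal of \(\FF[P/N]\) is nilpotent. The only addition is that you spell out why \(V\) is an \(\FF[\![P]\!]\)-module, which the paper takes for granted.
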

\begin{proof}
Since $V$ is the union of finite-dimensional $P$-subrepresentations, it is enough to assume $V$ is finite-dimensional. Then there exists an open normal subgroup $P'$ of $P$ such that $P'$ acts trivially on $V$. The Iwasawa algebra $\FF[\![P/P']\!]\cong\FF[P/P']$ acts on $V$.  Since $P/P'$ is a finite $p$-group, $\FF[P/P']$ is local Artinian, and its maximal ideal $I'$ is nilpotent. Thus $(I')^nV=0$ for some integer $n\ge1$, and hence $I^nV=0$.
\end{proof}
For two smooth $\FF$-representations $V_1$ and $V_2$ of $P$, we write $V_1\otimes_{\FF} V_2$ for the representation of $P$ with diagonal action.
\begin{lemma}\label{lem:tensor-vanishing}
Let \(V_1\) and \(V_2\) be smooth
\(\mathbb F\)-representations of a pro-$p$ group \(P\). If
\[
  \FF\otimes_{\FF[\![P]\!]}V_1=0,
\]
then
\[
\FF\otimes_{\FF[\![P]\!]}(V_1\otimes_{\mathbb F}V_2)=0.
\]
In particular,
\[
  \Hom_P(V_1\otimes_{\mathbb F}V_2,\mathbb F)=0.
\]
Here \(\mathbb F\) carries the trivial action.
\end{lemma}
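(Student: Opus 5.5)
The hypothesis $\FF\otimes_{\FF[\![P]\!]}V_1=0$ says $V_1=IV_1$. Iterating gives $V_1=I^nV_1$ for every $n\ge1$. The aim is to show $W:=V_1\otimes_\FF V_2$ satisfies $W=IW$. The final assertion then follows at once, because $\Hom_P(W,\FF)=\Hom_\FF(\FF\otimes_{\FF[\![P]\!]}W,\FF)$.

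By Lemma~\ref{smooth}, $V_2=\bigcup_n V_2[I^n]$, so $W=\bigcup_n V_1\otimes V_2[I^n]$. It therefore suffices to show $V_1\otimes V_2[I^n]\subseteq IW$ for each $n$. I would argue by induction on $n$, using the filtration $0\subset V_2[I]\subset V_2[I^2]\subset\cdots$.

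Base case. $V_2[I]$ carries the trivial action, so $V_1\otimes V_2[I]$ is a direct sum of copies of $V_1$ as a $P$-module. Hence it equals $I(V_1\otimes V_2[I])\subseteq IW$.

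Inductive step. Take $v\in V_1$ and $w\in V_2[I^{n}]$. Since $V_1=IV_1$ and $I$ is topologically generated by elements $g-1$ with $g\in P$, and smooth vectors are finite sums, we can write $v=\sum_i (g_i-1)v_i$. This finite-sum presentation needs a little care: $I$ is the closure of the span of the $g-1$, but $V_1$ is smooth, so for fixed $v$ the action factors through a finite quotient. The key identity is
\[
((g-1)v')\otimes w=(g-1)(v'\otimes w)-gv'\otimes (g-1)w .
\]
It holds because $g(v'\otimes w)=gv'\otimes gw$, so $(g-1)(v'\otimes w)=(gv'-v')\otimes w+gv'\otimes(gw-w)$. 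The first term on the right lies in $IW$. In the second term, $(g-1)w\in V_2[I^{n-1}]$, since $I^{n-1}(g-1)w\subseteq I^nw=0$. So the second term lies in $V_1\otimes V_2[I^{n-1}]$, which is in $IW$ by induction. Thus $v\otimes w\in IW$, and the induction closes.

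The main obstacle I anticipate is the step passing from the abstract statement $V_1=IV_1$ (with $I$ the ideal of the completed group algebra) to a finite expression $v=\sum (g_i-1)v_i$. I would handle it by noting that $IV_1$ equals the span of $\{(g-1)v': g\in P, v'\in V_1\}$. Any element of $I$ acts on a given vector through $\FF[P/P']$ for some open normal $P'$. The augmentation ideal of $\FF[P/P']$ is spanned by the $g-1$, which gives the finite presentation. With this in hand, the induction above runs without further difficulty.
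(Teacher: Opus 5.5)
Your proof is correct and follows essentially the same route as the paper: filter $V_2$ by the $V_2[I^n]$, handle the trivial-action layer directly, and induct on $n$, with your identity $((g-1)v')\otimes w=(g-1)(v'\otimes w)-gv'\otimes(g-1)w$ serving as the element-level version of the paper's right-exactness d\'evissage along the trivial quotients $V_2[I^{n+1}]/V_2[I^n]$. Your check that $IV_1$ is spanned by the vectors $(g-1)v'$ (via the finite quotients $P/P'$) makes explicit a point the paper uses only implicitly.
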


\begin{proof}
By Lemma~\ref{smooth}, we have $V_2=\bigcup_{n\ge1}V_2[I^n]$. Since tensor products commute with direct limits, we only need to show $\FF\otimes_{\FF[\![P]\!]}(V_1\otimes_{\mathbb F}V_2[I^n])=0$ for each $n\ge1$. The quotients $V_2[I^{n+1}]/V_2[I^n]$ have trivial $P$-action, since $I V_2[I^{n+1}]\subseteq V_2[I^n]$. By right exactness and dévissage, we only need to deal with the case when $V_2=V_2[I]$. In this case $P$ acts trivially on $V_2$, so
\[
\FF\otimes_{\FF[\![P]\!]}(V_1\otimes_{\mathbb F}V_2)\cong (\FF\otimes_{\FF[\![P]\!]}V_1)\otimes_{\mathbb F}V_2=0,
\]
as required.
\end{proof}

\begin{corollary}\label{cor:multilinear}
Let \(n\geq 1\), and let \(\pi_1,\ldots,\pi_n\) be smooth \(\mathbb F\)-representations of \(G=\GL_2(F)\). Suppose that at least one \(\pi_i\) is
infinite-dimensional and irreducible, with a central character. Then
\[
  \Hom_G\left(
  \pi_1\otimes_{\mathbb F}\cdots\otimes_{\mathbb F}\pi_n,
  \mathbb F\right)=0.
\]
\end{corollary}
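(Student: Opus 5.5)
The plan is to reduce to a statement about a single open pro-\(p\) subgroup of \(\cO_L^\times\) and then combine Corollary~\ref{cor:open-subgroup} with Lemma~\ref{lem:tensor-vanishing}.

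First, since the tensor product with diagonal action is symmetric up to canonical \(G\)-isomorphism, we may reorder the factors so that \(\pi_1\) is infinite-dimensional and irreducible with a central character. Fix the ramified quadratic extension \(L/F\) and the embedding \(\iota\) of \S\ref{sec:restriction}, and take \(P=1+\varpi_L\cO_L\). This is an open pro-\(p\) subgroup of \(\cO_L^\times\subset K\subset G\). Corollary~\ref{cor:open-subgroup} gives \(\Hom_P(\pi_1|_P,\FF)=0\).

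Next we convert this into vanishing of coinvariants. The \(\FF\)-dual of the space \(\FF\otimes_{\FF[\![P]\!]}\pi_1\) is \(\Hom_P(\pi_1,\FF)\). Indeed, a linear form on \(\pi_1\) is \(P\)-invariant if and only if it kills \(I\pi_1\). Here we use Lemma~\ref{smooth}: the \(\FF[\![P]\!]\)-module structure on \(\pi_1\) is the one coming from smoothness, so \(I\pi_1\) is spanned by the elements \((g-1)v\) with \(g\in P\). A vector space with zero dual is zero, so \(\FF\otimes_{\FF[\![P]\!]}\pi_1|_P=0\).

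Now apply Lemma~\ref{lem:tensor-vanishing} with \(V_1=\pi_1|_P\) and \(V_2=(\pi_2\otimes_\FF\cdots\otimes_\FF\pi_n)|_P\), or \(V_2=\FF\) when \(n=1\). Here \(V_2\) is smooth because a finite tensor product of smooth representations is smooth: a pure tensor is fixed by the intersection of the open stabilizers of its factors. The lemma gives \(\Hom_P(\pi_1\otimes\cdots\otimes\pi_n,\FF)=0\). Since every \(G\)-invariant linear form is in particular \(P\)-invariant,
\[
\Hom_G(\pi_1\otimes\cdots\otimes\pi_n,\FF)\subseteq\Hom_P(\pi_1\otimes\cdots\otimes\pi_n,\FF)=0.
\]

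No step here is a serious obstacle, since the substantive inputs are Corollary~\ref{cor:open-subgroup} and Lemma~\ref{lem:tensor-vanishing}. The only point needing care is the passage from \(\Hom_P(\pi_1,\FF)=0\) to vanishing of the \(\FF[\![P]\!]\)-coinvariants, which rests on the identification of \(I\pi_1\) with the span of the \((g-1)v\) explained above.
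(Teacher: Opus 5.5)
Your proposal is correct and follows the paper's proof essentially verbatim. Like the paper, you take \(P=1+\varpi_L\cO_L\), use Corollary~\ref{cor:open-subgroup} to get vanishing of the \(P\)-coinvariants of \(\pi_1\), transfer this to the full tensor product via Lemma~\ref{lem:tensor-vanishing}, and restrict \(G\)-invariant forms to \(P\). Your extra justification of the step from \(\Hom_P(\pi_1,\FF)=0\) to \(\FF\otimes_{\FF[\![P]\!]}\pi_1=0\), identifying \(I\pi_1\) with the span of the \((g-1)v\), fills in a detail that the paper leaves implicit.
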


\begin{proof}
After reordering the factors, we may assume that \(\pi_1\) is infinite-dimensional and irreducible, and has a central character. Set \(P=1+\varpi_L\cO_L\). Corollary~\ref{cor:open-subgroup} gives
\[
  \Hom_{P}(\pi_1|_{P},\mathbb F)=0.
\]
So $\FF\otimes_{\FF[\![P]\!]}\pi_1=0$.
Applying Lemma~\ref{lem:tensor-vanishing} to
\(\pi_1\) and \(\pi_2\otimes_{\mathbb F}\cdots\otimes_{\mathbb F}\pi_n\) with the latter tensor product interpreted as \(\FF\) when \(n=1\), we obtain
\[
  \Hom_{P}\left(
  \pi_1\otimes_{\mathbb F}\cdots\otimes_{\mathbb F}\pi_n,
  \mathbb F\right)=0.
\]
Every \(G\)-equivariant linear functional is
\(P\)-equivariant, so the claimed vanishing follows.
\end{proof}

\begin{remark}\label{rem:dospinescu}
After receiving an earlier version of this paper, Gabriel Dospinescu communicated to us the following direct proof of
Corollary~\ref{cor:multilinear}, using Corollary~\ref{cor:open-subgroup}. After reordering the factors, we may assume
that \(\pi_1\) is infinite-dimensional and irreducible, with a central
character. The case \(n=1\) follows directly from
Corollary~\ref{cor:open-subgroup}. Suppose \(n\geq2\) and that a nonzero
\(G\)-invariant linear form \(\ell\) exists. Then there exists a pure tensor
\(v_1\otimes_{\mathbb F}\cdots\otimes_{\mathbb F}v_n\) such that \(\ell(v_1\otimes_{\mathbb F}\cdots\otimes_{\mathbb F}v_n)\neq0\). By smoothness, there is an open pro-\(p\) subgroup
\(P\subseteq\cO_L^\times\) fixing \(v_2,\ldots,v_n\). The map
$$
 \begin{aligned}\pi_1&\longrightarrow\mathbb F\\
 v&\longmapsto
 \ell(v\otimes_{\mathbb F}v_2\otimes_{\mathbb F}\cdots
 \otimes_{\mathbb F}v_n)
 \end{aligned}
$$
is then a nonzero \(P\)-equivariant linear form, contradicting
Corollary~\ref{cor:open-subgroup}.
\end{remark}

\end{document}